\documentclass[11pt]{amsart}  
\usepackage{amsthm, amsmath, amscd, amssymb, latexsym, stmaryrd, color}

\usepackage[all]{xypic}
\input xypic
\usepackage{color}
\usepackage[top=3.0cm, bottom=3.0cm, left=3.0cm, right=3.0cm]{geometry}

\theoremstyle{plain}

\newtheorem{theoreme}{Theorem}
\newtheorem{coro}{Corollary}
\newtheorem{conjj}{Conjecture}
\newtheorem{theorem}{Theorem}[section]
\newtheorem{lemma}[theorem]{Lemma}
\newtheorem{proposition}[theorem]{Proposition}
\newtheorem{prop-def}[theorem]{Proposition-Definition}

\newtheorem{conjecture}[theorem]{Conjecture}

\theoremstyle{definition}

\theoremstyle{remark}
\newtheorem{remark}[theorem]{Remark}
\newtheorem*{ack}{Acknowledgement}

\usepackage[mathscr]{eucal}
\usepackage{graphics, graphpap}
\usepackage{array, tabularx, longtable}
\usepackage{color}
\numberwithin{equation}{section}

\def\Var{\mathrm{Var}}
\def\Mor{\mathrm{Mor}}

\def\loc{\mathrm{loc}}

\def\pr{\mathrm{pr}}

\def\sr{\mathrm{sr}}

\def\ord{\mathrm{ord}}

\def\Jac{\mathrm{Jac}}

\def\Spec{\mathrm{Spec}}

\def\Sing{\mathrm{Sing}}
\def\Id{\mathrm{Id}}

\def\ac{\mathrm{ac}}

\def\L{\mathbb{L}}


\title[Equivariant motivic integration and integral identity]{Equivariant motivic integration and proof of the integral identity conjecture for regular functions}
\author{L\^e Quy Thuong}
\date{}
\address{Department of Mathematics, Vietnam National University, Hanoi \newline
\indent 334 Nguyen Trai Street, Thanh Xuan District, Hanoi, Vietnam}
\email{leqthuong@gmail.com}

\thanks{The first author's research is funded by Vietnam National Foundation for Science and Technology Development (NAFOSTED) under grant number FWO.101.2015.02. The second author is supported by the National Foundation for Science and Technology Development (NAFOSTED), Grant number 101.04-2017.12, Vietnam}

\author{Nguyen Hong Duc}
\address{$^{\dag}$Hanoi Institute of Mathematics\newline \indent 18 Hoang Quoc Viet, Hanoi, Vietnam.} 
\email{nhduc82@gmail.com}
\address{$^{\dag}$Basque Center for Applied Mathematics, \newline \indent Alameda de Mazarredo 14, 48009 Bilbao, Bizkaia, Spain.} 
\email{hnguyen@bcamath.org}

\thanks{This research is also supported by ERCEA Consolidator Grant 615655 - NMST and by the Basque Government through the BERC 2014-2017 program and by Spanish Ministry of Economy and Competitiveness MINECO: BCAM Severo Ochoa excellence accreditation SEV-2013-0323.}

\keywords{(equivariant) motivic integration, motivic zeta function, motivic nearby cycles, integral identity conjecture}
\subjclass[2010]{Primary 14E18, 14G10, 14L30}

\begin{document}           

\begin{abstract}
We develop the Denef-Loeser motivic integration to the equivariant motivic integration and use it to prove the full integral identity conjecture for regular functions. 
\end{abstract}

\maketitle  

\section{Introduction}\label{sec1}
\subsection{}
Throughout this article, we shall work with a base field of characteristic zero $k$, as in the setup of the classical motivic integration in \cite{DL2}. Introduced by Kontsevich in 1995, motivic integration quickly becomes a crucial object in algebraic geometry on account of its connection to many fields of mathematics, such as mathematical physics, birational geometry, non-Archimedean geometry, tropical geometry, singularity theory, Hodge theory, model theory (cf. \cite{DL1}, \cite{DL2}), \cite{Ba}, \cite{Se}, \cite{LS}, \cite{HK}, \cite{NS}, \cite{Ni}, \cite{CL}, \cite{KS}). Let $X$ be an algebraic $k$-variety, and let $\mathscr L(X)$ be its arc space (cf. Section \ref{Prel}). According to \cite{DL2}, one can construct an additive measure $\mu$ on the family of {\it semi-algebraic} subsets of $\mathscr L(X)$ which takes value in a completion of the Grothendieck ring of algebraic $k$-varieties $\mathscr M_k$. There is a subfamily $\mathscr F_X$ consisting of {\it stable} semi-algebraic subsets of $\mathscr L(X)$ whose measure is exactly in $\mathscr M_k$; the restriction of $\mu$ on $\mathscr F_X$ will be denoted by $\tilde\mu$. The motivic integral of a {\it simple} function $\ell: A\to \mathbb Z$ on a semi-algebraic subset $A$ of $\mathscr L(X)$ can be defined if all the fibers of $\ell$ are semi-algebraic (i.e., measurable) and if the completion of $\mathscr M_k$ is ``big enough'' so that the sum $\sum_{n\in \mathbb Z}\mu(\ell^{-1}(n))\L^{-n}$ makes sense. Here, $\L$ is the class of $\mathbb A_k^1$ in the Grothendieck ring, which is invertible in $\mathscr M_k$. If for such a simple function $\ell$ all of its fibers and $A$ are in $\mathscr F_X$, then by a result in \cite{DL2}, $|\ell|$ is bounded and the sum 
\begin{align}\label{integration}
\int_A\L^{-\ell}d\tilde\mu:=\sum_{n\in \mathbb Z}\tilde\mu(\ell^{-1}(n))\L^{-n}
\end{align}
is in $\mathscr M_k$. A breakthrough point of motivic integration is that it admits a change of variables formula with respect to a proper birational morphism onto $X$, which Kontsevich used to prove Batyrev's conjecture on the Betti number of birationally equivalent complex Calabi-Yau varieties (cf. \cite{Ba}).

\subsection{}\label{sec1.2}
We shall concern an algebraic group $G$ and its good actions on $k$-schemes. The context where we investigate will be the $G$-equivariant Grothendieck ring $K_0^G(\Var_S)$ of $S$-varieties endowed with good $G$-action, and its localization $\mathscr M_S^G:=K_0^G(\Var_S)[\L^{-1}]$, where $S$ is an algebraic $k$-variety with trivial $G$-action, and $\L$ is the $S$-isomorphism class of $\mathbb A_S^1$ with affine $G$-action (cf. Section \ref{Prel}). The notion of piecewise trivial fibration for maps between constructible sets introduced in \cite{DL2} can be easily generalized to that of $G$-equivariant piecewise trivial fibration. The first fundamental result of the present article is the following theorem.

\begin{theoreme}[Theorem \ref{fund2}] \label{Thm1}
Let $X$, $Y$ and $F$ be algebraic $k$-varieties endowed with a good $G$-action, and let $f\colon X\to Y$ be a $G$-equivariant morphism. Assume that the categorical quotient $Y\to Y/\!/G$ exists and it is quasi-finite (e.g., $G$ is a finite group). Then $f$ is a $G$-equivariant piecewise trivial fibration if and only if for every $y$ in $Y$, there exists an $G_y$-equivariant isomorphism $X_y \stackrel{\cong}{\longrightarrow} F\times_k k(y)$.
\end{theoreme}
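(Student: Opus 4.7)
Immediate. If $Y=\bigsqcup_i Y_i$ is a $G$-stable partition with $G$-equivariant trivializations $f^{-1}(Y_i)\cong_G F\times Y_i$, then restricting to the fiber over any $y\in Y_i$ yields $X_y\cong F\times_k k(y)$, which is automatically $G_y$-equivariant because $G_y$ fixes $y$.

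\textbf{Reverse direction.} I would argue by Noetherian induction on $Y$: it suffices to exhibit a single nonempty $G$-stable locally closed subset $U\subseteq Y$ admitting a $G$-equivariant trivialization $f^{-1}(U)\cong_G F\times U$, for then the $G$-stable closed complement $Y\setminus U$ has strictly fewer top-dimensional irreducible components (counted with their $G$-orbit structure), and the induction hypothesis applies. To build $U$, let $\eta$ be the generic point of a top-dimensional irreducible component $C$ of $Y$ and set $H:=G_\eta$. Since $\eta$ is the unique generic point of $C$, every element of $G$ stabilizing $C$ must fix $\eta$; hence $H$ coincides with the stabilizer of $C$ in $G$. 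By hypothesis there is an $H$-equivariant isomorphism $X_\eta\xrightarrow{\sim} F\times_k k(\eta)$, which spreads out to an isomorphism $\psi\colon f^{-1}(V)\xrightarrow{\sim} F\times V$ on some open $V\subseteq C$ containing $\eta$. By shrinking $V$---using that two morphisms into a separated scheme agreeing on a dense subset of a reduced source agree on a dense open, together with finite intersections of $H$-translates---I may arrange that $V$ is $H$-stable, that $\psi$ is $H$-equivariant, and that $V\cap gC=\emptyset$ for every $g\notin H$ (this last is possible because the $G$-orbit of $C$ is finite and $C\cap gC$ has strictly smaller dimension than $C$ whenever $gC\neq C$).

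\textbf{Induction and descent.} With such a $V$, the $G$-saturation
$$U:=G\cdot V \;=\;\bigsqcup_{gH\in G/H} gV$$
is a $G$-stable locally closed subset of $Y$, the multiplication map $[g,v]\mapsto gv$ provides a $G$-equivariant isomorphism $G\times^H V\xrightarrow{\sim} U$, and likewise $f^{-1}(U)\cong_G G\times^H f^{-1}(V)$. Applying $G\times^H(-)$ to $\psi$ and composing with the natural $G$-equivariant identification
$$G\times^H(F\times V)\xrightarrow{\sim} F\times(G\times^H V),\qquad [g,(x,v)]\mapsto (g\cdot x,[g,v]),$$
produces the required trivialization $f^{-1}(U)\cong_G F\times U$, closing the inductive step.

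\textbf{Main obstacle.} The delicate point is the equivariant spreading-out of $\psi$, i.e., arranging $H$-equivariance on an $H$-stable open. For $G$ (and thus each $H$) finite---the case explicitly highlighted in the statement---standard averaging and finite-intersection arguments suffice. In the general setting the quasi-finiteness hypothesis on $Y\to Y/\!/G$ is precisely what is needed: it forces all $G$-orbits in $Y$ to be finite, so that locally near each orbit the action of the stabilizer behaves as if the stabilizer were finite, and the finite-group manipulations used above can be carried out essentially verbatim.
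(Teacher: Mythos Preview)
Your proof is correct and follows a genuinely different route from the paper's. The paper works over the quotient $Y/\!/G$ throughout: it starts from the finite fiber $Y_{S_0}$ over the set $S_0$ of generic points of $Y/\!/G$, glues the given $G_\eta$-equivariant isomorphisms along each $G$-orbit by explicit conjugation into a single $G$-equivariant isomorphism $X_{S_0}\cong F\times Y_{S_0}$, and then spreads this out (via EGA~IV) to an open $V\subseteq Y/\!/G$---first the isomorphism, then the equality of the two morphisms $\Phi\circ\rho_1$ and $\rho_2\circ(\Id\times\Phi)$ encoding $G$-equivariance; the preimage $Y_V$ is $G$-stable for free, and one iterates on the closed complement in $Y/\!/G$. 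You instead pick an irreducible component $C\subseteq Y$ with generic point $\eta$, spread out the $H=G_\eta$-equivariant trivialization to an $H$-stable open $V\subseteq C$, and then induce from $H$ to $G$ via the balanced product $G\times^H(-)$. The only consequence of the quasi-finite-quotient hypothesis you actually use is that all $G$-orbits in $Y$ are finite: this gives $[G:H]<\infty$ so that the induction makes sense, and---implicitly, a point worth making explicit---forces the identity component $H^0$ to act trivially on $C$ (connected group, finite orbits), so that your ``finite intersections of $H$-translates'' really is a finite intersection. Your approach is a bit more structural and in fact runs under the weaker hypothesis of finite orbits; the paper's is more streamlined because working over the quotient handles $G$-stability automatically and packages the equivariance as a single spreading-out of an equality of morphisms rather than an induction construction.
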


According to this theorem, if one of two equivalent conditions is satisfied, then we get the identity $[X]=[Y]\cdot [F]$ in $\mathscr M_k^G$. In the special case where $F$ is the affine variety $\mathbb A_k^n$, the following theorem is much more applicable than the previous one.

\begin{theoreme}[Theorem \ref{keycoro}]\label{keyco}
Let $X$ and $Y$ be algebraic $k$-varieties endowed with good $G$-action. Let $f\colon X\to Y$ be a $G$-equivariant morphism. Assume that the categorical quotient $Y\to Y/\!/G$ exists and it is quasi-finite (e.g., $G$ is a finite group). Assume moreover that for every $y$ in $Y$, there exists an isomorphism of $k(y)$-varieties $X_y\cong \mathbb{A}^n_{k(y)}$, for a given $n$ in $\mathbb N$.
Then the identity 
$$ [X]=[Y]\cdot \mathbb{L}^n$$
holds in $K_0^G(\Var_k)$.
\end{theoreme}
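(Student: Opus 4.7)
The plan is to reduce to Theorem~\ref{Thm1}, which produces a $G$-equivariant piecewise trivial fibration precisely when, for a \emph{single} $G$-variety $F$, every fiber admits a $G_y$-equivariant isomorphism $X_y\cong F\times_k k(y)$. Here the hypothesis is weaker: only plain $k(y)$-variety isomorphisms $X_y\cong\mathbb{A}^n_{k(y)}$ are given. The work is therefore to install the missing $G_y$-equivariance, which forces us to decompose $Y$ into finitely many $G$-stable constructible pieces and to choose on each piece a copy of $F=\mathbb{A}^n_k$ equipped with a suitable $G$-action.

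Concretely, I would proceed as follows. First, exploiting the quasi-finiteness of $Y\to Y/\!/G$ and the goodness of the action, partition $Y$ into finitely many $G$-invariant locally closed subvarieties $Y=\bigsqcup_{\alpha}Y_{\alpha}$ on which the stabilizer $G_{y}$ is a fixed subgroup $H_{\alpha}\leq G$. On each stratum, for every $y\in Y_{\alpha}$ the group $H_{\alpha}$ acts on $X_{y}\cong\mathbb{A}^n_{k(y)}$; after a further constructible refinement, I would arrange that all these $H_{\alpha}$-actions are pairwise $H_{\alpha}$-equivariantly isomorphic to the restriction to $H_{\alpha}$ of a \emph{single} $G$-action on $F_{\alpha}:=\mathbb{A}^n_k$, chosen so that $[F_{\alpha}]=\mathbb{L}^n$ in $K_0^G(\Var_k)$. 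Then Theorem~\ref{Thm1}, applied to the restricted morphism $f^{-1}(Y_{\alpha})\to Y_{\alpha}$, gives $[f^{-1}(Y_{\alpha})]=[Y_{\alpha}]\cdot\mathbb{L}^n$, and summing over $\alpha$ yields $[X]=[Y]\cdot\mathbb{L}^n$.

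The principal obstacle is obtaining the uniform $H_{\alpha}$-equivariant identification of fibers over each $Y_{\alpha}$. Linearization of finite group actions on affine $n$-space in characteristic zero is not available as a black box (this is essentially Kambayashi's problem), so a global argument on $Y_{\alpha}$ is out of reach. The way around is to use the family structure: the $H_{\alpha}$-isomorphism class of the action on $X_{y}$ is a constructible invariant of $y\in Y_{\alpha}$, and a Noetherian refinement—stratifying by this invariant and then spreading out a fixed equivariant model from the generic point of each sub-stratum—should produce the required uniform identification on sufficiently small pieces. Matching the resulting model $F_{\alpha}$ with $\mathbb{L}^n$ in $K_0^G(\Var_k)$ then reduces to a bookkeeping check against the definition of $\mathbb{L}$ recalled in Section~\ref{Prel}, completing the reduction to Theorem~\ref{Thm1}.
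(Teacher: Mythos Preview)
Your reduction to Theorem~\ref{Thm1} is a detour that creates the very obstacle you identify. The paper's proof does \emph{not} go through Theorem~\ref{Thm1} and never needs $G_y$-equivariant identifications of the fibers. The key point you are missing is that relation~\eqref{equiv} in the definition of $K_0^G(\Var_k)$ already does all the equivariant work when the fiber is affine space: any two good $G$-actions on $Y_V\times_k\mathbb{A}^n_k$ lifting the same action on $Y_V$ give the same class. So one only needs a \emph{non-equivariant} trivialization $\Phi_V\colon X_V\cong Y_V\times\mathbb{A}^n_k$ over some $G$-invariant dense open $Y_V$; then transport the $G$-action from $X_V$ to $Y_V\times\mathbb{A}^n_k$ via $\Phi_V$ (making $\Phi_V$ tautologically equivariant), observe that this transported action lifts the action on $Y_V$ because $f=\pr_1\circ\Phi_V$ is $G$-equivariant, and invoke~\eqref{equiv} to get $[X_V]=[Y_V]\cdot\mathbb{L}^n$. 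Noetherian induction on $Y\setminus Y_V$ finishes.

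The non-equivariant trivialization over a dense $G$-invariant open is obtained exactly as in the first half of the proof of Theorem~\ref{Thm1}: pick plain isomorphisms $X_\eta\cong\mathbb{A}^n_{k(\eta)}$ over the finitely many points $\eta$ of $Y$ lying over the generic points of $Y/\!/G$, and spread out by EGA~IV (no equivariance to propagate). Your proposed route instead requires controlling the $H_\alpha$-equivariant isomorphism type of $X_y$ as $y$ varies, which is precisely the linearization problem you flag; the claim that this type is ``a constructible invariant of $y$'' and that one can ``spread out a fixed equivariant model'' is not justified and is, as you note, essentially open in general. All of this is unnecessary once you use~\eqref{equiv} directly.
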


\subsection{}\label{sec1.3}
Let $\hat G$ be a group $k$-scheme of the form $\hat G=\varprojlim_{i\in I} G_i$, where $I$ is a partially ordered directed set and $\left\{G_i, G_j\to G_i \mid i\leq j \ \text{in}\ I\right\}$ is a projective system of algebraic groups over $k$. We shall consider good $\hat G$-actions on $k$-schemes ($\hat G$ acts on a scheme via some $G_i$) and develop the integral (\ref{integration}) to the $\hat G$-equivariant version. We define $K_0^{\hat G}(\Var_S):=\varinjlim  K_0^{G_i}(\Var_S)$ and $\mathscr M_S^{\hat G}:=K_0^{\hat G}(\Var_S)[\L^{-1}]$, and we have $\mathscr M_S^{\hat G}=\varinjlim  \mathscr M_S^{G_i}$. 

Assume that there exists a {\em nice} $\hat G$-action on $\mathscr L(X)$, i.e., for every $i$ in $I$ and $n$ in $\mathbb N$, there is a good $G_i$-action on $\mathscr L_n(X)$ such that the morphism $\pi^m_n: \mathscr L_m(X) \to \mathscr L_n(X)$ is $G_i$-equivariant for all $i\in I$ and $m\geq n$. For each $i$ in $I$, let $\mathscr F_X^{G_i}$ be the family of $A$ in $\mathscr F_X$ such that $\pi_m(A)$ is $G_i$-invariant for every $m\geq n$, with $n$ the stable level of $A$. Define $\mathscr F_X^{\hat G}:=\varprojlim\mathscr F_X^{G_i}$. Using Theorem \ref{keyco} (i.e., Theorem \ref{keycoro}), we can construct a well defined $\hat G$-equivariant additive measure 
\begin{align}\label{muG}
\tilde\mu^{\hat G}: \mathscr F_X^{\hat G} \to \mathscr M_k^{\hat G} 
\end{align}
and a natural $\hat G$-equivariant motivic integral 
$$\int_A\L^{-\ell}d\tilde\mu^{\hat G}:=\sum_{n\in \mathbb N}\tilde\mu^{\hat G}\left(\ell^{-1}(n)\right)\L^{-n},$$
which takes value in $\mathscr M_k^{\hat G}$, provided $A$ and all the fibers of a natural-value simple function $\ell$ are in $\mathscr F_X^{\hat G}$. Furthermore, we perform in Theorem \ref{changeofvariable} that the $\hat G$-equivariant motivic integration admits a change of variables formula with respect to a proper birational morphism onto $X$ (cf. Section \ref{EquivMotInt}).

\subsection{}
In significant applications, one takes $\hat G$ to be the profinite group scheme of roots of unity $\hat\mu$, the projective limit of the group schemes $\mu_n=\Spec \left(k[T]/(T^n-1)\right)$ and transition morphisms $\mu_{mn}\to \mu_n$ given by $\lambda\mapsto \lambda^m$. For this case, and for the natural $\hat\mu$-action on $\mathscr L(X)$ (cf. \cite{DL1}), we write simply $\tilde\mu$ instead of $\tilde\mu^{\hat\mu}$, the $\hat\mu$-equivariant measure on $\mathscr F_X^{\hat\mu}$, cf. (\ref{muG}).

Assume the algebraic $k$-variety $X$ is smooth of pure dimension $d$. We consider a regular function $f$ on $X$ whose zero locus $X_0$ is nonempty. For every $n\geq 1$, the sets 
\begin{gather*}
\overline{\mathscr X}_n(f):=\left\{\gamma\in \mathscr{L}(X) \mid f(\gamma)= t^n\mod t^{n+1}\right\}, \\ 
\overline{\mathscr X}_{n,x}(f):=\left\{\gamma\in \overline{\mathscr X}_n(f), \gamma(0)=x\right\},
\end{gather*}
with $x$ a closed point in $X_0$, are in $\mathscr F_X^{\mu_n}$ with stable level being $n$. Notice that the $k$-variety $\mathscr X_n(f):=\pi_n(\overline{\mathscr X}_n(f))$ admits the natural morphism to $X_0$ sending $\gamma$ to $\gamma(0)$. In view of \cite{DL1}, the motivic zeta functions $\sum_{n\geq 1}[\mathscr{X}_n(f)]\L^{-nd}T^n$ in $\mathscr M_{X_0}^{\hat{\mu}}[[T]]$ and $\L^d\sum_{n\geq 1}\tilde\mu(\overline{\mathscr X}_{n,x}(f))T^n$ in $\mathscr M_k^{\hat{\mu}}[[T]]$ are {\it rational} series. The {\it limit} of these series, $\mathscr S_f$ and $\mathscr S_{f,x}$, are called the motivic nearby cycles of $f$ and the motivic Milnor fiber of $f$ at $x$, respectively (cf. Section \ref{MZF}).  

We want to go further on the rationality of a formal power series with coefficients in $\mathscr M_k^{\hat\mu}$. The following theorem is the first attempt, it proves a special case of Conjecture \ref{conj2}. We do not need $X$ to be smooth in our result.

\begin{theoreme}[Theorem \ref{Thm3.4}]\label{Thm2}
Let $X$ be an algebraic $k$-variety and $f$ a regular function on $X$. Let $A_{\alpha}$, $\alpha$ in $\mathbb N^r$, be a semi-algebraic family of semi-algebraic subsets of $\mathscr L(X)$ such that, for every covering of $X$ by affine open subsets $U$, any semi-algebraic condition defining $A_{\alpha}\cap \mathscr L(U)$ contains only conditions of two first forms in (\ref{eqadd}). Assume that, for every $\alpha$ in $\mathbb N^r$, $A_{\alpha}$ is stable and disjoint with $\mathscr L(X_{\Sing})$. For $n\geq 1$, we put 
$$A_{n,\alpha}:=\left\{\gamma\in A_{\alpha} \mid f(\gamma)=t^n\mod t^{n+1}\right\},$$
which is in $\mathscr F_X^{\mu_n}$. Then the series $\sum_{(n,\alpha)\in \mathbb N^{r+1}}\tilde{\mu}\left(A_{n,\alpha}\right)T_0^nT_1^{\alpha_1}\cdots T_r^{\alpha_r}$ is rational.
\end{theoreme}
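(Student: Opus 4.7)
The plan is to reduce the problem to an explicit monomial computation via a log resolution, following the strategy of Denef--Loeser's rationality proof for motivic zeta functions but carried out in the $\hat\mu$-equivariant setting developed in this paper. By Hironaka's theorem, choose a log resolution $h\colon Y\to X$ — proper birational, $Y$ smooth — such that $h^{-1}(X_0\cup X_{\Sing})$, the exceptional divisor of $h$, and the zero loci of every regular function appearing in the semi-algebraic description of the family $(A_\alpha)$ together form a simple normal crossings divisor on $Y$. Denote its components by $(E_i)_{i\in J}$, and let $N_i$ and $\nu_i-1$ be the multiplicities along $E_i$ of $f\circ h$ and $\Jac_h$.

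By the $\hat\mu$-equivariant change of variables formula (Theorem \ref{changeofvariable}), applicable because each $A_\alpha$ is stable and disjoint from $\mathscr L(X_{\Sing})$, the measure $\tilde\mu(A_{n,\alpha})$ equals the measure of $h^{-1}(A_{n,\alpha})$ on $\mathscr L(Y)$ weighted by $\L^{-\ord_t\Jac_h}$. Under the hypothesis that only the first two forms of (\ref{eqadd}) occur, the pulled-back conditions are valuative: they become linear conditions on the contact orders $k_i:=\ord_{E_i}(\gamma)$ of the form $L_j(k)=\alpha_j$ or $L_j(k)\geq\alpha_j$, with $L_j$ a linear form with non-negative integer coefficients determined by $h$. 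Meanwhile $f(\gamma)=t^n\bmod t^{n+1}$ becomes the single linear equation $\sum_iN_ik_i=n$ together with a natural $\mu_n$-valued condition on the angular component of $f\circ h\circ \gamma$.

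For each $I\subseteq J$ set $E_I^\circ=(\bigcap_{i\in I}E_i)\setminus\bigcup_{j\notin I}E_j$, and stratify the relevant arcs on $Y$ by $I=\{i : k_i>0\}$ and the tuple $(k_i)_{i\in I}\in \mathbb N_{>0}^I$. Applying Theorem \ref{keyco} to the successive truncation maps from the jet-space stratum with prescribed contact multiplicities down to $E_I^\circ$ yields an equivariant product decomposition, so that
\[
\tilde\mu(A_{n,\alpha})=\sum_{I\subseteq J}\sum_{(k_i)}[\widetilde{E_I^\circ}]\cdot \L^{-\sum_{i\in I}(k_i+\nu_i-1)-\dim X},
\]
the inner sum running over $(k_i)_{i\in I}\in \mathbb N_{>0}^I$ subject to $\sum_iN_ik_i=n$ and the constraints from the $L_j$, and where $\widetilde{E_I^\circ}\to E_I^\circ$ denotes the standard $\mu_n$-Galois cover of \cite{DL1}. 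Multiplying by $T_0^n\prod_jT_j^{\alpha_j}$ and summing over $(n,\alpha)$ absorbs all linear constraints into the monomial exponents, so the total series becomes a finite sum, indexed by the strata $I\subseteq J$, of products of independent geometric series in $T_0,\ldots,T_r$ with coefficients in $\mathscr M_k^{\hat\mu}$; each such factor is rational, hence so is the sum.

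The main obstacle is the equivariant fiberwise decomposition above: the truncation maps down to $E_I^\circ$ must be $\hat\mu$-equivariant affine bundles, which is exactly the setting of Theorem \ref{keyco}; without this fiberwise affine criterion the computation would only land in $\mathscr M_k$, not in $\mathscr M_k^{\hat\mu}$. Two secondary technicalities must also be handled: one must choose a single resolution $h$ that simultaneously monomialises $f$ and every regular function appearing in the defining conditions (possible by a finite iteration of blow-ups); and inequalities $L_j(k)\geq\alpha_j$ are reduced to equalities via the substitution $\alpha_j=\alpha_j'+\beta_j$ with $\beta_j\geq 0$, which merely contributes extra factors $(1-T_j)^{-1}$ and preserves rationality.
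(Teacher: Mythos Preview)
Your overall architecture matches the paper's: reduce to the smooth case, take a single log resolution $h$ that monomialises $f$ together with all the regular functions $f_1,\dots,f_m$ occurring in the defining condition, stratify by $I\subseteq J$ and contact vector $(k_j)_{j\in I}$, and use the equivariant affine-bundle criterion (Theorem \ref{keyco}) to produce the classes $[\widetilde{E}_I^\circ]$ in $\mathscr M_k^{\hat\mu}$. The paper organises the computation slightly differently---it first stratifies $A_{n,\alpha}$ by the tuple $\beta=(\ord_t f_i)_i$ via a Presburger predicate $\theta'(\beta,\alpha)$ and only then passes through the resolution---but this is cosmetic.

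The substantive gap is your last step. You assert that after substituting the linear constraints the generating series becomes ``a finite sum of products of independent geometric series''. That is not what happens. The two admissible forms in (\ref{eqadd}) include congruences $\ord_t f_3\equiv \ell_2(\alpha)\pmod d$ and inequalities $\ord_t f_1\geq \ord_t f_2+\ell_1(\alpha)$ that, after resolution, become genuine Presburger conditions coupling the $k_j$'s and the $\alpha$'s (e.g.\ $\sum_j N_j(f_1)k_j-\sum_j N_j(f_2)k_j\geq \ell_1(\alpha)$); the variables do not decouple, and your slack-variable trick for a single inequality does not reduce an arbitrary finite boolean combination of such constraints to a product of one-variable geometric series. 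The paper closes this step by packaging everything into a Presburger set $P\subset\mathbb N^{m+r+2}$, projecting to $(n,\alpha,\alpha_{r+1})$ with finite fibers, and invoking the Presburger rationality lemma \cite[Lemma 5.2]{DL2} to conclude that the auxiliary series $S_I(T_0,\dots,T_r,T_{r+1})$ lies in the appropriate rational subring; specialising $T_{r+1}=\L^{-1}$ then gives $S_I(T_0,\dots,T_r)\in\mathscr M_k^{\hat\mu}[[T_0,\dots,T_r]]_{\sr}$. You should replace your final paragraph by this Presburger argument.

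A smaller point: your appeal to Theorem \ref{changeofvariable} requires $A_{n,\alpha}\cap\mathscr L(h(E))=\emptyset$, but $h(E)$ contains the zero loci of the $f_i$, not just $X_{\Sing}$, so disjointness from $\mathscr L(X_{\Sing})$ is not enough as stated. The paper sidesteps this by computing $[\pi_N(D_{n,\beta})]$ directly \`a la \cite[Lemma 2.5, Theorem 2.4]{DL3} rather than literally invoking the change of variables formula; either do the same, or first excise the (measure-zero, not weakly stable) locus of arcs with some $\ord_t f_i=+\infty$.
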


A consequence of this theorem plays an important role in the last section of the article.

\begin{coro} [Proposition \ref{Prop4.5}]
Let $X$, $f$, $A_{\alpha}$ and $A_{n,\alpha}$ be as in Theorem \ref{Thm2}. Let $\Delta$ be a cone in $\mathbb{R}^{r+1}_{\geq 0}$ and $\bar{\Delta}$ its closure. Let $\ell$ and $\varepsilon$ be integral linear forms on $\mathbb Z^{r+1}$ with $\ell(n,\alpha)>0$ and $\varepsilon(n,\alpha)\geq 0$ 
for all $(n,\alpha)$ in $\bar{\Delta}\setminus \{0\}$. Then the series $\sum_{(n,\alpha)\in \Delta\cap \mathbb N^{r+1}}\tilde{\mu}\left(A_{n,\alpha}\right)\L^{-\varepsilon(n,\alpha)}T^{\ell(n,\alpha)}$ is rational, and its limit is independent of the forms $\ell$ and $\varepsilon$.
\end{coro}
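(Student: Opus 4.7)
\emph{Proof proposal.} The plan is to leverage the multivariable rationality from Theorem~\ref{Thm2}, reduce via a compatible rational simplicial subdivision of $\bar\Delta$ to finite combinations of geometric series in $T$, and then analyze the limit cone-by-cone. By Theorem~\ref{Thm2}, the series
\[
P(T_0,\ldots,T_r):=\sum_{(n,\alpha)\in\mathbb{N}^{r+1}}\tilde\mu(A_{n,\alpha})\,T_0^{n}T_1^{\alpha_1}\cdots T_r^{\alpha_r}
\]
lies in the $\mathscr M_k^{\hat\mu}$-subalgebra of $\mathscr M_k^{\hat\mu}[[T_0,\ldots,T_r]]$ generated over $\mathscr M_k^{\hat\mu}[T_0,\ldots,T_r]$ by inverses of factors $1-\L^{a}T^{c}$ with $c\in\mathbb{N}^{r+1}\setminus\{0\}$ and $a\in\mathbb{Z}$. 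Fix such a presentation and let $C$ denote the finite set of denominator exponent vectors appearing in it. I then choose a rational simplicial fan $\Sigma$ in $\mathbb{R}_{\geq 0}^{r+1}$ whose support equals $\bar\Delta$ and whose set of rays contains every ray spanned by an element of $C\cap\bar\Delta$; such a $\Sigma$ exists by standard refinement. Then $\Delta\cap\mathbb{N}^{r+1}$ is a finite disjoint union, over the simplicial cones $\delta\subset\Delta$ of $\Sigma$, of their relative interiors intersected with $\mathbb{N}^{r+1}$.

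For each such $\delta$ with primitive generators $v_1,\ldots,v_{d(\delta)}$, the standard half-open fundamental domain decomposition writes $\delta\cap\mathbb{N}^{r+1}$ as a finite disjoint union $\bigsqcup_{b\in B_\delta}\bigl(b+\mathbb{N}v_1+\cdots+\mathbb{N}v_{d(\delta)}\bigr)$. The monoid map $(n,\alpha)\mapsto(-\varepsilon(n,\alpha),\ell(n,\alpha))$ induces an $\mathscr M_k^{\hat\mu}$-algebra homomorphism from the subring of monomials supported in $\bar\delta$ into $\mathscr M_k^{\hat\mu}[[T]]$, sending each factor $1-\L^{a}T^{c}$ with $c\in\bar\delta$ to $1-\L^{a-\varepsilon(c)}T^{\ell(c)}$; the positivity $\ell(c)>0$ makes the latter a unit in $\mathscr M_k^{\hat\mu}[[T]]$. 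Applying this homomorphism to the rational expression for $P$ cone by cone and summing yields the desired series as a finite sum of rational functions in $T$, proving rationality.

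For the independence of the limit, the substituted contribution of each $\delta$ is a finite $\mathscr M_k^{\hat\mu}$-linear combination of terms of the shape
\[
\frac{\L^{-\varepsilon(b)}T^{\ell(b)}\,Q_\delta(\L,T)}{\prod_{c\in C\cap\bar\delta}\bigl(1-\L^{a_c-\varepsilon(c)}T^{\ell(c)}\bigr)},\quad b\in B_\delta,
\]
with $Q_\delta$ a polynomial. The Denef-Loeser limit at $T=\infty$ sends each such term to an explicit monomial in $\L$ whose $\varepsilon$-contributions must cancel when summed over $b\in B_\delta$ and against those from the denominators; making this cancellation rigorous in the multivariable setting is the main obstacle. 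My approach is to interpolate between admissible pairs: the set of pairs $(\ell,\varepsilon)$ with $\ell>0$ and $\varepsilon\geq 0$ on $\bar\Delta\setminus\{0\}$ is a relatively open convex cone, so any two admissible pairs are joined by a rational family of admissible pairs along which the limit is computed by the same combinatorial recipe. Reducing to the case of a single ray $\delta=\mathbb{R}_{\geq 0}v$, the sum along $\delta$ is simply $\sum_{m\geq 0}\tilde\mu(A_{mv})\L^{-m\varepsilon(v)}T^{m\ell(v)}$, whose limit picks out only the formal ``constant at infinity'' of $\sum_{m\geq 0}\tilde\mu(A_{mv})T^m$, manifestly independent of $\ell(v)$ and $\varepsilon(v)$. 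Induction on $d(\delta)$, using additivity of the limit over the fan decomposition, yields the general case.
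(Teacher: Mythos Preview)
Your rationality argument has a gap. You correctly observe that the substitution $T^{c}\mapsto \L^{-\varepsilon(c)}T^{\ell(c)}$ is only well-defined on a factor $(1-\L^{a}T^{c})^{-1}$ when $\ell(c)>0$, which is guaranteed for $c\in\bar\Delta\setminus\{0\}$ but not in general. The rational expression for $P$ furnished by Theorem~\ref{Thm3.4} may well have denominator exponents outside $\bar\Delta$, and your fan refinement does not manufacture, for each cone $\delta$, a rational expression for the restriction $P|_{\delta}$ whose denominators lie in $\bar\delta$; your displayed formula with denominator $\prod_{c\in C\cap\bar\delta}(1-\L^{a_c-\varepsilon(c)}T^{\ell(c)})$ simply assumes this without justification. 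The paper avoids the issue by going back into the \emph{proof} of Theorem~\ref{Thm3.4}: the constraint $(n,\alpha)\in\Delta$ is just one more Presburger condition, so the log-resolution computation there applies verbatim and \cite[Lemma~5.2]{DL2} yields rationality directly in the required one-variable form.

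The independence argument has a more serious gap. Your interpolation idea does not work: there is no topology on $\mathscr M_k^{\hat\mu}$ in which the limit varies continuously along a family of admissible $(\ell,\varepsilon)$, so convexity of the admissible set buys nothing. Your single-ray base case is fine, since the substitution $T\mapsto \L^{-e}T^{l}$ in one variable sends each generator $\frac{\L^{a}T^{b}}{1-\L^{a}T^{b}}$ to another such generator and hence commutes with $\lim_{T\to\infty}$. But the inductive step is neither stated nor carried out, and it faces a genuine obstacle: already for a \emph{pure} cone sum over a simplicial $\delta$ of dimension $d\geq 2$ with generators $v_1,\dots,v_d$, the fundamental-domain expression
\[
\sum_{b\in B_\delta}\frac{\L^{-\varepsilon(b)}T^{\ell(b)}}{\prod_{i=1}^{d}\bigl(1-\L^{-\varepsilon(v_i)}T^{\ell(v_i)}\bigr)}
\]
has both numerator and denominator depending on $(\ell,\varepsilon)$, and showing that its limit is nevertheless independent is exactly the content of Guibert's lemma \cite[Lemme~2.1.5]{G} (see also \cite[Section~2.9]{GLM1}). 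The paper's proof re-enters the log-resolution formula from the proof of Theorem~\ref{Thm3.4}, performs a Presburger elimination to parametrize everything by lattice points of a single cone $C\subset\mathbb R_{\geq 0}^{N}$ equipped with linear forms $\omega,\omega'$ built from $\ell,\varepsilon$, and then invokes Guibert's lemma. Your argument is missing this key ingredient; the ``induction on $d(\delta)$'' you gesture at would, if made precise, amount to reproving that lemma.
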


In summary, we consider Theorems \ref{Thm1}, \ref{keyco}, \ref{Thm2} and their corollaries to be the heart in our conceptual setting of equivariant motivic integration, which inherits Denef-Loeser's idea on the classical motivic integration for stable semi-algebraic subsets of an arc space. Unless taking the rationality result into account, we have developed the equivariant motivic integration with {\it any} group scheme being the limit of a projective system of finite algebraic groups.
For formal schemes, Hartmann \cite{Hartmann} recently provided a preprint on equivariant motivic integration with respect to an abelian finite group. Her work may be regarded as the version with action of finite groups of Sebag and Loeser's motivic integration \cite{Se}, \cite{LS}.

\subsection{}
We discuss a crucial application of the $\hat\mu$-equivariant motivic integration to the full version of the integral identity conjecture for regular functions. It is well known that this conjecture is a building block in Kontsevich-Soibelman's theory of motivic Donaldson-Thomas invariants for noncommutative Calabi-Yau threefolds, it implies directly the existence of these invariants (cf. \cite{KS}, \cite[Section 1]{Thuong2}). Let us state the version for regular functions of the conjecture (for the version for formal functions, see \cite[Conjecture 4.4]{KS}).

Recall from \cite{DL1} that any morphism of $k$-varieties $g: S\to S'$ induces a morphism of rings $g^*: \mathscr M_{S'}^{\hat\mu}\to \mathscr M_S^{\hat\mu}$ by fiber product, and induces a morphism of groups $g_!: \mathscr M_S^{\hat\mu}\to \mathscr M_{S'}^{\hat\mu}$ by composition. When $S'$ is $\Spec k$ we write $\int_S$ for $g_!$.

\begin{conjj}[Kontsevich-Soibelman]\label{conj}
Let $(x,y,z)$ be the standard coordinates of the vector space $k^d=k^{d_1}\times k^{d_2}\times k^{d_3}$. Let $f$ be in $k[x,y,z]$ such that $f(0,0,0)=0$ and $f(\lambda x, \lambda^{-1} y,z)=f(x,y,z)$ for all $\lambda$ in $\mathbb G_{m,k}$. Then the integral identity $\int_{\mathbb A_k^{d_1}}i^*\mathscr{S}_f=\L^{d_1}\mathscr{S}_{\tilde{f},0}$ holds in $\mathscr M_k^{\hat\mu}$, where $\tilde{f}$ is the restriction of $f$ to $\mathbb A_k^{d_3}$, and $i$ is the inclusion of $\mathbb A_k^{d_1}$ in $f^{-1}(0)$.
\end{conjj}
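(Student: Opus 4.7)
The plan is to compute both sides of the identity as limits of $\hat\mu$-equivariant motivic zeta functions on $\mathscr{L}(\mathbb{A}_k^d)$, to stratify the relevant arcs according to the jet orders of the $x$- and $y$-components, and to exploit the $\mathbb{G}_m$-invariance of $f$ together with Theorem \ref{keyco} in order to reduce the identity to a vanishing statement on strata where $y$ does not vanish.

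More precisely, writing $\gamma = (x(t), y(t), z(t))$, the left-hand side is the limit at $T = \infty$ of $\sum_{n \geq 1} [B_n]\mathbb{L}^{-nd} T^n$, where $B_n \subset \mathscr{L}_n(\mathbb{A}_k^d)$ is cut out by $y(0) = z(0) = 0$ and $f(\gamma) \equiv t^n \pmod{t^{n+1}}$; the right-hand side is the limit of $\mathbb{L}^{d_1} \sum_{n \geq 1}[\tilde B_n]\mathbb{L}^{-nd_3} T^n$, where $\tilde B_n \subset \mathscr{L}_n(\mathbb{A}_k^{d_3})$ is the analogous variety for $\tilde f$. I would stratify $B_n = B_n^{\infty} \sqcup \bigsqcup_{b \geq 1} B_n^b$ by the jet order $b$ of $y$, with $B_n^{\infty}$ denoting the stratum $\{y \equiv 0 \pmod{t^{n+1}}\}$. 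Letting $\lambda \to 0$ in the identity $f(\lambda x, \lambda^{-1}y, z) = f(x, y, z)$ yields $f(x, 0, z) = \tilde f(z)$ for all $(x, z)$, so on $B_n^{\infty}$ the defining equation reduces to $\tilde f(z) \equiv t^n \pmod{t^{n+1}}$ with the $x$-coordinate free; equivariantly this gives $B_n^{\infty} \cong \mathbb{A}_k^{d_1(n+1)} \times \tilde B_n$, where the first factor carries trivial $\hat\mu$-action.

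The proof then reduces to two computations. First, the contribution from $B_n^{\infty}$, after accounting for the $\mathbb{L}^{-nd}$ normalization and for the collapse of the $y$-coordinates, should produce exactly $\mathbb{L}^{d_1} \mathscr{S}_{\tilde f, 0}$ in the limit $T \to \infty$. Second, the remaining strata $B_n^b$ with $b \geq 1$ must contribute zero. For this I would use the $\mathbb{G}_m$-substitution $(x,y,z) \mapsto (t^b x, t^{-b} y, z)$, which preserves $f$ and transforms $B_n^b$ into a variety on which $y(0) \neq 0$; Theorem \ref{keyco} then provides a $\hat\mu$-equivariant piecewise trivial fibration expressing $[B_n^b]$ as an affine bundle over a ``reduced'' base. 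Summing over $b$ yields geometric series in $\mathbb{L}^{-1}$ whose rationality is guaranteed by Theorem \ref{Thm2} applied to the appropriate semi-algebraic family, and Proposition \ref{Prop4.5} ensures that the limit of this series at $T = \infty$ vanishes.

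The main obstacle is verifying that the reductions on the strata $B_n^b$ with $b \geq 1$ are genuinely $\hat\mu$-equivariant. The $\mu_n$-action endowing $\mathscr X_n(f)$ with its $\hat\mu$-structure arises from the substitution $t \mapsto \zeta t$ with $\zeta^n = 1$, and it interacts nontrivially with the $\mathbb{G}_m$-scaling used to eliminate the strata. Establishing that the piecewise trivial fibrations produced by Theorem \ref{keyco} are equivariant for both actions simultaneously, so that the cancellation takes place in $\mathscr{M}_k^{\hat\mu}$ rather than merely in $\mathscr{M}_k$, is the crux of the argument; I expect this step to rest on the $\hat\mu$-equivariant change of variables formula of Theorem \ref{changeofvariable} applied to an embedded resolution of $f^{-1}(0)$ compatible with the $\mathbb{G}_m$-action, together with the full strength of the $\hat\mu$-equivariant integration developed in Section \ref{EquivMotInt}.
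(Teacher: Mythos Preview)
Your global strategy coincides with the paper's: decompose the pulled-back zeta function, extract $\L^{d_1}\mathscr S_{\tilde f,0}$ from a ``main'' stratum, and show the complementary contribution vanishes via Proposition~\ref{Prop4.5}. Your main-term computation from $B_n^{\infty}$ is correct (though the paper uses a slightly larger stratum $U_n=\{\ord_tx+\ord_ty>n\}$; the difference has limit zero, so either choice works).

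The genuine gap is in your vanishing argument for $\bigsqcup_{b\geq 1}B_n^b$. The substitution $(x,y,z)\mapsto(t^bx,t^{-b}y,z)$ is not an isomorphism of $n$-jet schemes: $x\mapsto t^bx$ kills the top $b$ coefficients of $x$, so at best you obtain an $\mathbb A^{bd_1}$-fibration onto some target. Invoking Theorem~\ref{keyco} then produces a single affine factor $[B_n^b]=\L^{bd_1}[\text{base}]$, and it is not clear how a single series of this shape can be shown to have limit zero; Proposition~\ref{Prop4.5} compares two series differing only in the linear form $\varepsilon$, and your construction does not produce such a pair. Your closing expectation that Theorem~\ref{changeofvariable} together with an equivariant embedded resolution will resolve the equivariance issues is also off the mark: neither the change-of-variables formula nor any resolution of $f^{-1}(0)$ is used in the paper's vanishing step.

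What the paper does instead is the key idea you are missing. Rather than manipulate jets by an ad hoc $t$-power substitution, it passes to the GIT quotient $\phi\colon X\to Y=X/\!/\mathbb G_{m,k}$, which exists since $\mathbb G_{m,k}$ is reductive and $X$ affine, and observes that $f$ descends to a regular function $g$ on $Y$. Stratifying by $m=\ord_tx+\ord_ty$ and applying Theorem~\ref{fund2} (\emph{not} Theorem~\ref{keyco}, since the fiber carries a nontrivial $\mu_n$-action and is not an affine space) to the induced map $\phi_n\colon W_{n,m}\to V_{n,m}\subset\mathscr L_n(Y)$, one checks fiberwise that $\phi_n$ is a $\mu_n$-equivariant piecewise trivial fibration with fiber $F=\{\tau\in\mathscr L_n(\mathbb A_k^1)\mid \ord_t\tau<m\}$. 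This yields $[W_{n,m}]=[V_{n,m}](\L^{n+1}-\L^{n-m+1})$, hence $W(T)=\L^d\sum\tilde\mu(A_{n,m})T^n-\L^d\sum\tilde\mu(A_{n,m})\L^{-m}T^n$ as a \emph{difference} of two series to which Proposition~\ref{Prop4.5} applies directly, giving $\mathbb W=0$. The passage to the quotient $Y$ is precisely what makes the fibration and its $\mu_n$-equivariance transparent, and replaces your heuristic substitution by a rigorous geometric construction.
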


The conjecture was first proved by L\^e \cite{Thuong1} for the case where $f$ is either a function of Steenbrink type or the composition of a pair of regular functions with a polynomial in two variables. In \cite[Theorem 1.2]{Thuong}, in view of the formalism of Hrushovski-Kazhdan \cite{HK} and Hrushovski-Loeser \cite{HL}, L\^e showed that Conjecture \ref{conj} holds in $\mathscr M_{\loc}^{\hat\mu}$, a ``big'' localization of $\mathscr M_k^{\hat\mu}$, as soon as the base field $k$ is algebraically closed (see \cite{Thuong2} for a short review about that work). 
Recently, Nicaise and Payne \cite{NP} have developed an effective new method to compute motivic nearby cycles as motivic volume of semi-algebraic sets, the motivic Fubini theorem for the tropicalization map, on the foundation of \cite{HK} and tropical geometry. Their approach needs the condition that $k$ contains all roots of unity, but it brings out a strong improvement when proving the conjecture with the context $\mathscr M_k^{\hat\mu}$. 

Our proof of Conjecture \ref{conj} in this article is complete, no additional hypothesis is needed. The work is based on our equivariant motivic integration and it is devoted in Section \ref{lastsection} for performance of detailed arguments.

\section{Some results on the equivariance}\label{Prel}
\subsection{Equivariant Grothendieck rings of varieties}
Let $k$ be a field of characteristic zero, and $S$ an algebraic $k$-variety. As usual (cf. \cite{DL1}, \cite{DL2}), we denote by $\Var_S$ the category of $S$-varieties and $K_0(\Var_S)$ its Grothendieck ring. By definition, $K_0(\Var_S)$ is the quotient of the free abelian group generated by the $S$-isomorphism classes $[X\to S]$ in $\Var_S$ modulo the following relation 
$$[X\to S]=[Y\to S]+[X\setminus Y\to S]$$ 
for $Y$ being Zariski closed in $X$. Together with fiber product over $S$, $K_0(\Var_S)$ is a commutative ring with unity $1=[\Id: S\to S]$. Put $\L:=[\mathbb A_k^1\times_k S\to S]$ and write $\mathscr M_S$ for the localization of $K_0(\Var_S)$ which makes $\L$ invertible.

Let $X$ be an algebraic $k$-variety, and let $G$ be an algebraic group which acts on $X$. The $G$-action is called {\it good} if every $G$-orbit is contained in an affine open subset of $X$. Now we fix a good action of $G$ on the $k$-variety $S$ (we may choose the trivial action). By definition, the $G$-equivariant Grothendieck group $K_0^G(\Var_S)$ of $G$-equivariant morphism of $k$-varieties $X\to S$, where $X$ is endowed with a good $G$-action, is the quotient of the free abelian group generated by the $G$-equivariant isomorphism classes $[X\to S,\sigma]$ modulo the following relations
$$[X\to S,\sigma]=[Y\to S,\sigma|_Y]+[X\setminus Y\to S,\sigma|_{X\setminus Y}]$$
for $Y$ being $\sigma$-invariant Zariski closed in $X$, and moreover
\begin{align}\label{equiv}
[X\times_k\mathbb A_k^n\to S,\sigma]=[X\times_k\mathbb A_k^n\to S,\sigma']
\end{align}
if $\sigma$ and $\sigma'$ lift the same good $G$-action on $X$. As above, we have the commutative ring with unity structure on $K_0^G(\Var_S)$ by fiber product, where $G$-action on the fiber product is through the diagonal $G$-action, and we may define the localization $\mathscr M_S^G$ of the ring $K_0^G(\Var_S)$ by inverting $\L$. Here, $\L$ is the class of $[\mathbb A_k^1\times_k S\to S]$ endowed with a good action of $G$. 

Let $\hat G$ be a group $k$-scheme of the form $\hat G=\varprojlim_{i\in I} G_i$, where $I$ is a partially ordered set and $\left\{G_i, G_j\to G_i \mid i\leq j \ \text{in}\ I\right\}$ is a projective system of algebraic groups over $k$. In particular, we may consider $\hat G$ to be the profinite group scheme of roots of unity $\hat\mu$, the projective limit of the group schemes $\mu_n=\Spec \left(k[T]/(T^n-1)\right)$ and transition morphisms $\mu_{mn}\to \mu_n$ sending $\lambda$ to $\lambda^m$. We define $K_0^{\hat G}(\Var_S):=\varinjlim_{i\in I}  K_0^{G_i}(\Var_S)$ and $\mathscr M_S^{\hat G}:=K_0^{\hat G}(\Var_S)[\L^{-1}]$, which implies the identity $\mathscr M_S^{\hat G}=\varinjlim_{i\in I}  \mathscr M_S^{G_i}$.  

\subsection{Equivariant piecewise trivial fibrations}
We start with Denef-Loeser's definition of piecewise trivial fibration in \cite{DL2}. Let $X$, $Y$ and $F$ be algebraic $k$-varieties, and let $A$ and $B$ be respectively constructible subsets of $X$ and $Y$. A map $f: A\to B$ is called a {\it piecewise trivial fibration with fiber $F$} if there is a finite partition of $B$ into locally closed subsets $B_i$ in $Y$ such that, for every $i$, $f^{-1}(B_i)$ is locally closed in $X$ and isomorphic as a $k$-variety to $B_i\times_k F$, and that under the isomorphism $f|_{B_i}$ corresponds to the projection $B_i\times_k F\to B_i$. More generally, given a constructible subset $C$ of $B$, $f$ is by definition a {\it piecewise trivial fibration over $C$} if $f|_{f^{-1}(C)}: f^{-1}(C) \to C$ is a piecewise trivial fibration.  

\begin{theorem}[Sebag \cite{Se}, Th\'eor\`eme 4.2.3]
With the previous notation and hypotheses, the map $f: A\to B$ is a piecewise trivial fibration with fiber $F$ if and only if for every $y$ in $B$, the fiber $f^{-1}(y)$ is isomorphic as a $k(y)$-variety to $F\times_kk(y)$.
\end{theorem}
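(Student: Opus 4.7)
The \emph{only if} direction is immediate from the definition: if $B = \bigsqcup_{i} B_i$ with $f^{-1}(B_i) \cong B_i \times_k F$ over $B_i$, then the fiber at any $y \in B_i$ is $\{y\} \times_k F \cong F \times_k k(y)$. For the converse, my plan is Noetherian induction on the Zariski closure $\overline{B} \subseteq Y$: I will locate a constructible dense subset $B_0 \subseteq B$ on which $f$ is a trivial fibration with fiber $F$, and then apply the inductive hypothesis to $B \setminus B_0$ (whose closure is strictly smaller) to complete the partition.

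To produce $B_0$, I would first intersect $B$ with the irreducible components of $\overline{B}$, reducing to the case where $\overline{B}$ is irreducible with generic point $\eta$. By hypothesis there is an isomorphism $\phi_\eta \colon f^{-1}(\eta) \xrightarrow{\sim} F \times_k k(\eta)$ of $k(\eta)$-varieties, and the key step is to spread $\phi_\eta$ out to an isomorphism over a principal open of $\overline{B}$ containing $\eta$. Choose an affine open $V = \Spec R \subseteq \overline{B}$ with $\eta \in V$, cover $F$ by finitely many affine opens $\Spec A_\alpha$ with $A_\alpha$ finitely generated over $k$, and cover $A \cap f^{-1}(V)$ by finitely many affine opens $\Spec S_\beta$ with $S_\beta$ finitely generated over $R$. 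On each relevant piece, the restrictions of $\phi_\eta$ and $\phi_\eta^{-1}$ are given by ring homomorphisms between finitely presented $k(\eta) = \mathrm{Frac}(R)$-algebras, together with finitely many equations witnessing that they are mutually inverse. Clearing all denominators simultaneously produces a single $h \in R$ such that the entire data descends to $R_h$, yielding an isomorphism of $R_h$-schemes that trivializes $f$ with fiber $F$ over $D(h) \subseteq V$. Setting $B_0 := B \cap D(h)$ gives the desired dense constructible open of $B$.

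The induction then closes: since $\eta \in D(h)$, the complement $B \setminus B_0$ is constructible and its Zariski closure is a proper closed subset of $\overline{B}$, so by Noetherian induction it admits a finite partition into locally closed pieces over which $f$ is trivialized with fiber $F$; adjoining $B_0$ gives the required partition of $B$. The hard part is the spreading-out step: descent of ring homomorphisms between finitely presented algebras along filtered localizations is standard material (essentially the content of EGA IV, \S 8), but some bookkeeping is needed to carry out the reduction simultaneously on all the finitely many affine pieces of $F$ and of $A \cap f^{-1}(V)$, since a single common element $h \in R$ must work to trivialize $f$ on every piece at once. Once this is arranged, no further hypothesis on $f$ (flatness, properness, etc.) is needed, and the proof proceeds essentially formally.
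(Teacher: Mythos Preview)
The paper does not actually prove this theorem: it is quoted verbatim from Sebag \cite{Se} as background, with no proof given. So there is no ``paper's own proof'' to compare against directly. That said, your argument is correct and is exactly the standard one; it is also the non-equivariant shadow of what the paper does in its proof of Theorem~\ref{fund2} (the $G$-equivariant analogue). There the authors run the same Noetherian induction: pass to the generic points of the base, use the fiberwise hypothesis to build an isomorphism over those points, spread it out to a dense open, and iterate on the complement. The only real difference is packaging: where you clear denominators by hand on finitely many affine charts, the paper invokes the ready-made limit results from EGA~IV, \S 8 (specifically \cite[Corollary 8.8.2.5]{Gro66}) and the Stacks Project to pass from the isomorphism at the generic point to one over a dense open. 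Your explicit version has the advantage of being self-contained; theirs is shorter and makes the spreading-out step a black box. Either way the content is the same.
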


Now we go to the equivariant framework. Let $G$ be an algebraic group over $k$. Let $X$, $Y$ and $F$ be algebraic $k$-varieties endowed with a good $G$-action, let $A$ and $B$ be $G$-invariant constructible subsets of $X$ and $Y$, respectively. Consider a $G$-equivariant map of constructible sets $f: A\to B$ which is the restriction of a given $G$-equivariant morphism $X\to Y$. Then $f$ is called a {\it $G$-equivariant piecewise trivial fibration with fiber $F$} if there exists a stratification of $B$ into finitely many $G$-invariant locally closed subsets $B_i$ such that, for every $i$, $f^{-1}(B_i)$ is a $G$-invariant constructible subset of $A$ and $f^{-1}(B_i)$ is $G$-equivariant isomorphic to $B_i\times_k F$, with the action of $G$ on $B_i\times_k F$ being the diagonal one, and moreover, over $B_i$, $f$ corresponds to the projection $B_i\times_k F\to B_i$. In view of the definition of $K_0^G(\Var_k)$, such a map $f$ yields the identity 
$$[X]=[Y]\cdot [F]$$ 
in $K_0^G(\Var_k)$. In the following theorem we will give a criterion for which a morphism of algebraic $k$-varieties with $G$-action is a $G$-equivariant piecewise trivial fibration (we only consider the case $A=X$ and $B=Y$ in the previous definition). Let us fix the notation which concern. For a morphism of algebraic $k$-varieties $X\to Y$ and any immersion $S\to Y$, the standard notation $X_S$ will stand for their fiber product. And, for each $y$ in $Y$, the {\em stabilizer subgroup $G_y$} of $G$ with respect to $y$ is the subgroup of elements in $G$ that fix $y$.
 
\begin{theorem}\label{fund2} 
Let $X$, $Y$ and $F$ be algebraic $k$-varieties endowed with good $G$-actions, and let $f\colon X\to Y$ be a $G$-equivariant morphism. Assume that the categorical quotient $Y\to Y/\!/G$ exists and it is quasi-finite (e.g., $G$ is a finite group). Then $f$ is a $G$-equivariant piecewise trivial fibration if and only if for every $y$ in $Y$, there exists an $G_y$-equivariant isomorphism of $k(y)$-varieties $X_y \stackrel{\cong}{\longrightarrow} F\times_k k(y)$. 

\end{theorem}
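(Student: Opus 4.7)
The ``only if'' direction is immediate from unpacking the definition: given a $G$-equivariant stratification $Y = \sqcup B_i$ with equivariant trivializations $f^{-1}(B_i) \cong B_i \times_k F$, restricting along $\Spec k(y)\to B_i$ for the unique stratum $B_i\ni y$ yields a $G_y$-equivariant isomorphism $X_y \cong F\times_k k(y)$ (the diagonal $G$-action on $B_i\times_k F$ restricts to the $G_y$-action on $\{y\}\times_k F$).

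For the converse, I would argue by noetherian induction on $Y$. The base case $Y=\emptyset$ is trivial; for the inductive step the goal is to produce a nonempty $G$-invariant locally closed $U\subseteq Y$ over which $f$ becomes $G$-equivariantly trivial with fiber $F$, after which one applies the induction hypothesis to the $G$-variety $Y\setminus U$. To build $U$, choose a generic point $\eta$ of an irreducible component of $Y$. By hypothesis, there is a $G_\eta$-equivariant isomorphism $\phi_\eta\colon X_\eta\stackrel{\cong}{\to} F\times_k k(\eta)$. Standard spreading-out produces an open $V\subseteq Y$ containing $\eta$ together with an isomorphism $\phi_V\colon f^{-1}(V)\stackrel{\cong}{\to} V\times_k F$ of $V$-varieties whose fiber at $\eta$ is $\phi_\eta$.

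Now the hypothesis that $Y\to Y/\!/G$ is quasi-finite enters: the $G$-orbit of $\eta$ is finite, so the identity component of $G$ fixes $\eta$ (in characteristic zero its orbit would be connected), and modding out by the kernel of the action on a neighborhood of $\eta$ reduces us to a finite group acting on $Y$ near $\eta$; in particular $G_\eta$ has finite index. Shrinking $V$ to the finite intersection $\bigcap_{g\in G_\eta} gV$, we arrange $V$ to be $G_\eta$-invariant. For each $g\in G_\eta$, the two $V$-morphisms $\phi_V$ and $g\cdot\phi_V\cdot g^{-1}$ agree at the generic point $\eta$; by separatedness of $V\times_k F$ they agree on a dense open, and intersecting over the finite set $G_\eta$ we may shrink $V$ once more so that $\phi_V$ is actually $G_\eta$-equivariant. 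A final shrinking makes the distinct translates $\{gV : g\in G/G_\eta\}$ pairwise disjoint (the finitely many points $g\eta$ are distinct by choice of coset representatives, so separatedness of $Y$ affords small enough open neighborhoods). Setting
\[
U:=\bigsqcup_{g\in G/G_\eta} gV,
\]
a $G$-invariant open of $Y$, and defining $\phi_U$ on $f^{-1}(gV)$ by $\phi_U(x):=g\cdot\phi_V(g^{-1}x)$, the $G_\eta$-equivariance of $\phi_V$ makes $\phi_U$ well defined and $G$-equivariant, completing the inductive step.

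The main obstacle I anticipate is the equivariance upgrade of $\phi_V$: $G_\eta$-invariance of the base $V$ is a formal finite intersection, but promoting the trivialization itself from ``equivariant only at $\eta$'' to ``equivariant over an open'' requires comparing two spreading-outs that agree at a single generic point. This comparison works precisely because quasi-finiteness of $Y\to Y/\!/G$ forces $G_\eta$ to act through a finite group, so only finitely many comparisons are needed and one can shrink $V$ a controlled number of times. A secondary bookkeeping point is ensuring that the distinct translates $gV$ are literally disjoint as schemes (not merely generically so), which again rests on separatedness combined with the finiteness of $G/G_\eta$.
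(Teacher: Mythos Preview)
Your overall architecture---noetherian induction plus spreading out a generic trivialization, then $G$-saturating---is sound and close to the paper's, but the paper organizes things differently and your equivariance step has a real gap when $G_\eta$ is not finite.

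The paper never picks a single generic point of $Y$. Instead it works over the quotient: it takes $S_0$ to be the set of generic points of $Y/\!/G$, so that $Y_{S_0}$ is a finite $G$-invariant set of points of $Y$; it assembles a $G$-equivariant isomorphism $\Phi_{S_0}\colon X_{S_0}\cong F\times_k Y_{S_0}$ by transporting the given $\Phi_\eta$ along orbit representatives (checking equivariance is a short cocycle computation using the $G_\eta$-equivariance of $\Phi_\eta$); then it spreads out over an open $U\subseteq Y/\!/G$. The payoff of working downstairs is that the preimage $Y_U$ is automatically $G$-invariant, so no manual saturation or disjoint-translate bookkeeping is needed.

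Your gap is in promoting $\phi_V$ to a $G_\eta$-equivariant map. You write ``intersecting over the finite set $G_\eta$,'' but the hypothesis only gives that $G_\eta$ has finite \emph{index}; $G_\eta$ itself may be positive-dimensional (e.g.\ $G=\mathbb G_{m,k}$ acting trivially on $Y$ but by scaling on $F=\mathbb A^1_k$ and on the fibers of $X=Y\times_k\mathbb A^1_k\to Y$). Your reduction ``$G_\eta$ acts through a finite group'' is valid for the action on $Y$, but the equivariance condition $\phi_V(g\cdot x)=g\cdot\phi_V(x)$ concerns the actions on $X$ and $F$, which need not factor through any finite quotient. So shrinking once per $g\in G_\eta$ does not terminate. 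The paper's remedy is exactly what you should do: package equivariance as the equality of the two morphisms
\[
\varphi_V:=\Phi_V\circ\rho_1,\qquad \psi_V:=\rho_2\circ(\Id\times\Phi_V)\ \colon\ G\times X_V\longrightarrow Y_V\times_k F,
\]
which agree over the generic locus, and then invoke a single spreading-out/limit comparison (the paper cites the Stacks Project, Tag~01ZM) to shrink $V$ once so that $\varphi_V=\psi_V$, i.e.\ $\Phi_V$ is $G$-equivariant. With that fix your outline goes through; without it the argument is only valid for finite $G$.
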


\begin{proof}
We first observe that, if $G$ is a finite group, then the geometric quotient $\phi_Y\colon Y\to Y/\!/G$ exists and it is a finite morphism, according to \cite[Expos\'e V, Proposition 1.8]{Gro63}. We can see moreover that the ``only if'' statement of the theorem is obvious, so we will only prove the ``if'' statement. Let $S_0$ be the set of all the generic points of $Y/\!/G$ and let $\zeta\in S_0$. Since the quotient $\phi_Y\colon Y\to Y/\!/G$ is quasi-finite, the scheme $Y_{\zeta}$ is finite. Then $Y_{\zeta}=\bigsqcup_{\eta\in I}G\cdot\eta$, where $I$ is a finite subset of $Y$.  Furthermore, the orbits $G\cdot\eta$ are also finite, that is, $G\cdot\eta=\{ \eta,\eta_1,\ldots\eta_l\}$ for some natural number $l$. For each $1\leq i\leq  l$, we take an element $g_i$ in $G$ such that $\eta_i=g_i\cdot\eta$ and define an isomorphism of $k({\eta_i})$-schemes $\Phi_{\eta_i}\colon X_{\eta_i}\cong F\times_k k({\eta_i})$ as the composition
$$X_{\eta_i}\overset{g^{-1}_i\cdot}{\longrightarrow} X_{\eta} \cong F\times k(\eta)\overset{g_{i}\cdot }{\longrightarrow} F\times k(\eta_i),$$
i.e. $\Phi_{\eta_i}(x)=g_i\Phi_{\eta}(g^{-1}_i x)$, where the middle isomorphism $\Phi_{\eta}$ is taken from the hypothesis. Combining all the isomorphisms $\Phi_{\eta_i}$ with $\zeta$ in $S_0$ and $\eta$ in $I$ we obtain an isomorphism 
$$\Phi_{S_0}\colon X_{S_0}\cong  F\times_k Y_{S_0},$$
defined as, $\Phi_{S_0}(x)=\Phi_{\eta_i}(x)$ if $x\in X_{\eta_i}$. We have, moreover, that $\Phi_{S_0}$ is $G$-equivariant. In fact, take any $x\in X_{S_0}$ and $g\in G$. We assume that $\eta_1=f(x), \eta_2=f(gx)$ and $\eta_1=g_1\eta, \eta_2=g_2\eta$ as above. Then $$g_2\eta=\eta_2=f(gx)=g f(x)=g\eta_1=gg_1\eta,$$
and therefore $g^{-1}_2gg_1\in G_{\eta}$. This implies that
\begin{align*}
\Phi_{S_0}(gx)=\Phi_{\eta_2}(gx)=g_2\Phi_{\eta}(g^{-1}_2gx)=g_2\Phi_{\eta}(g^{-1}_2gg_1 g^{-1}_1x)=g_2g^{-1}_2gg_1\Phi_{\eta}( g^{-1}_1x),
\end{align*}
where the last equality follows from the $G_{\eta}$-equivariance of $\Phi_{\eta}$. Hence
\begin{align*}
\Phi_{S_0}(gx)=gg_1\Phi_{\eta}( g^{-1}_1x)=g\Phi_{\eta_1}(x)=g \Phi_{S_0}(x),
\end{align*}
as desired. We are going to show that there exists a dense open subset $U$ of $Y/\!/G$ such that there is an isomorphism $X_U\cong U\times F$ through which $f$ factors. Indeed, let $\mathcal{T}$ be the set of open dense subsets $\lambda$ of $Y/\!/G$. It is a directed partially ordered set with relation $\lambda\leq \lambda'$ if $\lambda'\subseteq \lambda$. To apply Grothendieck's descending theory we define the initial objects 
$$\alpha:=Y/\!/G\in \mathcal{T},\quad S_{\alpha}:=Y,$$ 
and $S_{\alpha}$-schemes 
$$A_{\alpha}:=X\overset{f}{\longrightarrow}Y,\quad B_{\alpha}:=Y\times F\overset{\pr_1}{\longrightarrow}Y,$$ 
and consider the following projective systems with natural transition morphisms
$$S_{\lambda}:=Y_{\lambda}, \quad A_{\lambda}:=A_{\alpha}\times_{S_{\alpha}}S_{\lambda}=X_{\lambda},\quad B_{\lambda}:=B_{\alpha}\times_{S_{\alpha}}S_{\lambda}=Y_{\lambda}\times F.$$
Note that $\varprojlim S_{\lambda}=Y_{S_0}$, $A:=\varprojlim A_{\lambda}=X_{S_0}$ and $B:=\varprojlim B_{\lambda}=Y_{S_0}\times F$. It follows from \cite[Corollary 8.8.2.5]{Gro66} (see also, \cite{Stack}, Lemma 31.8.11 (Tag 081E)) that there exist $U\geq \alpha$ and an $S_U$-isomorphism $\Phi_U\colon A_{U}\to B_{U}$ such that the diagram 
\begin{displaymath}
\xymatrix{
X_{U} \ar[rr]^{\Phi_{U}}\ar@{->}[dr]_{f}&&Y_{U}\times F\ar@{->}[dl]^{\pr_1}\\
&Y_{U}&
}
\end{displaymath}
commutes. Let us now consider the two $Y_{U}$-schemes $G\times X_U $ and $Y_U\times F$, and the two $Y_{U}$-morphisms of these schemes $\varphi_U=\Phi_{U}\circ \rho_1$ and $\psi_U=\rho_2\circ (\Id \times \Phi_{U})$
\begin{displaymath}
\xymatrix{
&X_{U} \ar[dr]^{\Phi_{U}}\ar@{<-}[dl]_{\rho_1}&\\
G\times X_{U} \ar@{->}[dr]_{\Id \times \Phi_{U}}&&Y_{U}\times F\ar@{<-}[dl]^{\rho_2}\\
&G\times Y_{U}\times F&
}
\end{displaymath}
where $\rho_1$ and $\rho_2$ are action morphisms of $G$ on $X_{U}$ and $Y_{U}\times F$, respectively. It follows from the $G$-equivariant isomorphism $X_{S_0}\cong Y_{S_0}\times F$ that $\varphi_{U,Y_{S_0}}=\psi_{U,Y_{S_0}}$, where $\varphi_{U,Y_{S_0}}$ and $\psi_{U,Y_{S_0}}$ are the base changes of $\varphi_{U}$ and $\psi_{U}$ by the inclusion $Y_{S_0}\to Y_{U}$, respectively. Applying \cite{Stack}, Lemma 31.10.1 (Tag 01ZM), there exists $V \geq U$ such that $\varphi_{U,Y_{V}}=\psi_{U,Y_{V}}$. Note that $Y_V$ is a $G$-invariant subset of $Y_U$, it then follows that $\varphi_{U,Y_{V}}=\varphi_V$ and $\psi_{U,Y_{V}}=\psi_V$, where $\varphi_V=\Phi_{V}\circ \rho_1$ and $\psi_V=\rho_2\circ (\Id \times \Phi_{V})$. The equality $\varphi_V=\psi_V$ yields that the isomorphism 
$$\Phi_V\colon X_{V}\to Y_{V}\times F$$ 
is $G$-equivariant. Repeating the above argument for the closed subset $Y\setminus Y_V$ and so on, we get a finite stratification $Y=\bigsqcup_{0\leq i\leq n_0} Y_i$ into $G$-invariant locally closed subsets $Y_i$, with $Y_1=Y_V$ and $\dim Y_i>\dim Y_{i+1}$, such that $f$ is a $G$-equivariant trivial fibration with fiber $F$ over each stratum $Y_i$. This completes the proof of the theorem. 
\end{proof}

\begin{theorem}\label{keycoro}
Let $X$ and $Y$ be algebraic $k$-varieties endowed with good $G$-action, and let $f\colon X\to Y$ be a $G$-equivariant morphism. Assume the categorical quotient $Y\to Y/\!/G$ exists and it is quasi-finite (e.g., $G$ is a finite group). Assume moreover that for every $y$ in $Y$, there exists an isomorphism of $k(y)$-varieties $X_y\cong \mathbb{A}^n_{k(y)}$, for a given $n$ in $\mathbb N$.
Then the identity 
$$ [X]=[Y]\cdot \mathbb{L}^n$$
holds in $K_0^G(\Var_k)$.
\end{theorem}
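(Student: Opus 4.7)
The plan is to adapt the descent-and-spreading argument from the proof of Theorem \ref{fund2}, dispensing with the requirement of a $G$-equivariant trivializing isomorphism (which the hypothesis here does not supply) and compensating via the crucial relation (\ref{equiv}) in the definition of $K_0^G(\Var_k)$: any two good $G$-actions on $V\times_k\mathbb{A}^n_k$ that lift the same $G$-action on $V$ yield the same class.

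I would proceed by Noetherian induction on $\dim Y$. The inductive step reduces to constructing a $G$-invariant dense open $V\subset Y$ with $[X_V]=[V]\cdot\mathbb{L}^n$ in $K_0^G(\Var_k)$: taking $V=\phi_Y^{-1}(U)$ for some dense open $U\subset Y/\!/G$, the $G$-invariant closed complement $Y\setminus V=\phi_Y^{-1}((Y/\!/G)\setminus U)$ has dimension strictly smaller than $\dim Y$ by quasi-finiteness of $\phi_Y$, so the induction hypothesis together with the scissor relation closes the argument.

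To produce $V$, I would run the limit-and-spreading procedure from the proof of Theorem \ref{fund2} using the directed system of dense opens $\lambda\subset Y/\!/G$ with $S_\lambda:=\phi_Y^{-1}(\lambda)$ (all automatically $G$-invariant since $G$ acts trivially on $Y/\!/G$). At each generic point $\eta\in Y_{S_0}$ the hypothesis of Theorem \ref{keycoro} supplies only a $k(\eta)$-isomorphism $\Phi_\eta\colon X_\eta\cong\mathbb{A}^n_{k(\eta)}$; assembling these fiberwise yields a (non-equivariant) $Y_{S_0}$-isomorphism $X_{Y_{S_0}}\cong Y_{S_0}\times_k\mathbb{A}^n_k$, and \cite[Corollary 8.8.2.5]{Gro66} spreads it to a $V$-isomorphism $\Phi_V\colon X_V\cong V\times_k\mathbb{A}^n_k$ over some $V=\phi_Y^{-1}(U)$ with $U\subset Y/\!/G$ dense open. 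Unlike in the proof of Theorem \ref{fund2}, I would make no attempt to upgrade $\Phi_V$ to a $G$-equivariant isomorphism.

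Finally, I would use $\Phi_V$ to transport the $G$-action on $X_V$ to a good $G$-action $\sigma$ on $V\times_k\mathbb{A}^n_k$. Since $\Phi_V$ is a $V$-morphism and $X_V\to V$ is $G$-equivariant, $\sigma$ lifts the given $G$-action on $V$; comparing $\sigma$ with the action $\sigma'$ having trivial $G$-action on the $\mathbb{A}^n_k$-factor, relation (\ref{equiv}) yields
\[
[X_V]=[V\times_k\mathbb{A}^n_k,\sigma]=[V\times_k\mathbb{A}^n_k,\sigma']=[V]\cdot\mathbb{L}^n
\]
in $K_0^G(\Var_k)$, as required. The main obstacle is conceptual rather than computational: the $G_y$-equivariance of fiber isomorphisms that drove Theorem \ref{fund2} is no longer available, so the entire strategy hinges on relation (\ref{equiv}) being powerful enough to absorb the exotic transported $G$-action on $\mathbb{A}^n_k$. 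Granted this identification, the rest is a direct transcription of the argument in the proof of Theorem \ref{fund2}.
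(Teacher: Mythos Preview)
Your proposal is correct and follows essentially the same route as the paper's own proof: spread out the fiberwise (non-equivariant) trivializations over the generic points of $Y/\!/G$ to a $Y_V$-isomorphism $\Phi_V\colon X_V\cong Y_V\times_k\mathbb{A}^n_k$ via \cite[Corollary 8.8.2.5]{Gro66}, transport the $G$-action through $\Phi_V$, invoke relation (\ref{equiv}) to obtain $[X_V]=[Y_V]\cdot\mathbb{L}^n$, and then iterate on the closed complement. Your framing of the recursion as Noetherian induction on $\dim Y$ is a cosmetic variant of the paper's ``repeat on $(Y/\!/G)\setminus V$'' argument.
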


\begin{proof}
As in the proof of Theorem \ref{fund2}, it follows from \cite[Expos\'e V, Proposition 1.8]{Gro63} that, if $G$ is finite, then the geometric quotient $\phi_Y\colon Y\to Y/\!/G$ exists and it is a finite morphism. Let $S_0$ be the set of the generic points of $Y/\!/G$. Then the scheme $Y_{S_0}$ is $G$-invariant and it is a finite subset of $Y$. By assumption, for each $\eta$ in $Y_{S_0}$, there exists an isomorphism $\Phi_{\eta}\colon X_{\eta}\to k(\eta)\times \mathbb{A}^n_k$. It yields an isomorphism $\Phi_{S_0} \colon X_{S_0}\to Y_{S_0}\times \mathbb{A}^n_k$ defined by $\Phi_{S_0}(x)=\Phi_{\eta}(x)$ if $x$ is in $X_{\eta}$. Applying \cite[Corollary 8.8.2.5]{Gro66} (see the first part of the proof of Theorem \ref{fund2}) we obtain an open dense subset $V$ in $Y/\!/G$ and an isomorphism $\Phi_{V}\colon X_{V}\cong Y_{V}\times \mathbb{A}^n_{k}$ of $Y_{V}$-varieties, through which $f$ factors, i.e $f=\pi_1\circ \Phi_{V}$. We now endow the product variety $Y_{V}\times \mathbb{A}^n_k$ with a good action of $G$ defined as $g\cdot z=\Phi_{V}\left(g\cdot\Phi^{-1}_{V}(z)\right)$ for all $g\in G$ and $z\in Y_{V}\times \mathbb{A}^n_k$. Then the isomorphism $\Phi_{V}$ and the projection (on to the first component) $\pi_1$ becomes $G$-equivariant. It yields that, the action on $Y_{V}\times \mathbb{A}^n_k$ is a lifting of the action of $G$ on $Y_{V}$. We deduce the identity 
$$[X_{V}]= [Y_{V}\times\mathbb{A}^n_{k}],$$
which is equal to $[Y_{V}]\cdot \mathbb{L}^n$  
in $K_0^G(\Var_k)$, by the definition of the ring $K_0^G(\Var_k)$ (in particular, see relation \eqref{equiv}). Note that $V$ is open and dense in $Y/\!/G$, so its complement $(Y/\!/G)\setminus V$ is closed subset of $Y/\!/G$. Repeating the above argument for the closed subset $(Y/\!/G)\setminus V$ and so on, we get a stratification $Y/\!/G=\bigsqcup_{0\leq i\leq m_0} Z_i$ of $G$-invariant locally closed subsets $Z_i$, with $Z_1=V$ and $\dim Z_i>\dim Z_{i+1}$, such that the identities 
$$[X_{Z_i}]= [Y_{Z_i}]\cdot \mathbb{L}^n$$ 
hold in $K_0^G(\Var_k)$. The theorem is now definitely proved.
\end{proof}


\section{Arc spaces, equivariant motivic measure and integration}\label{arc-motint}
\subsection{Arc spaces and semi-algebraic sets}
Let $X$ be an algebraic $k$-variety. For any $n$ in $\mathbb N$, denote by $\mathscr L_n(X)$ the $k$-scheme of $n$-jets of $X$, which represents the functor from the category of $k$-algebras to the category of sets sending a $k$-algebra $A$ to $\Mor_{k\text{-schemes}}(\Spec(A[t]/A(t^{n+1})),X)$. For $m\geq n$, the truncation $k[t]/(t^{m+1})\to k[t]/(t^{n+1})$ induces a morphism of $k$-schemes 
$$\pi_n^m:\mathscr L_m(X)\to \mathscr L_n(X)$$ 
and this is an affine morphism. If $X$ is smooth of dimension $d$, the morphism $\pi_n^m$ is a locally trivial fibration with fiber $\mathbb{A}_k^{(m-n)d}$. The $n$-jet schemes and the morphisms $\pi_n^m$ form in a natural way a projective system of $k$-schemes, we call the projective limit 
$$\mathscr L(X):=\varprojlim \mathscr L_n(X)$$ 
the {\it arc space of $X$}. Note that $\mathscr L(X)$ is a $k$-scheme but it is not of finite type. For any field extension $K\supseteq k$, the $K$-points of $\mathscr{L}(X)$ correspond one-to-one to the $K[[t]]$-points of $X$. Denote by $\pi_n$ the natural morphism 
$$\mathscr{L}(X)\to\mathscr{L}_n(X).$$ 

Recall from \cite[Section 2]{DL2}, for any algebraically closed field $K$ containing $k$, that a subset of $K(\!(t)\!)^m\times \mathbb Z^r$ is {\it semi-algebraic} if it is a finite boolean combination of sets of the form
\begin{align*}
\big\{(x,\alpha)\in & K(\!(t)\!)^m\times \mathbb Z^r \mid \ord_tf_1(x)\geq \ord_tf_2(x)+\ell_1(\alpha),\\ 
&\ord_tf_3(x)\equiv \ell_2(\alpha)\mod d,\ \Phi(\overline{\ac}(g_1(x)),\dots,\overline{\ac}(g_n(x)))=0\big\},
\end{align*}
where $f_i$, $g_j$ and $\Phi$ are polynomials over $k$, $\ell_1$ and $\ell_2$ are polynomials over $\mathbb Z$ of degree at most 1, $d$ is in $\mathbb N$, and $\overline{\ac}(g_j(x))$ is the angular component of $g_j(x)$. One calls a collection of formulas defining a semi-algebraic set a {\it semi-algebraic condition}. A family $\{A_{\alpha} \mid \alpha\in \mathbb N^r\}$ of subsets $A_{\alpha}$ of $\mathscr L(X)$ is called a {\it semi-algebraic family of semi-algebraic subsets} if there exists a covering of $X$ by affine Zariski open sets $U$ such that $A_{\alpha}\cap \mathscr L(U)$ is defined by a semi-algebraic condition, that is, 
\begin{align*}
A_{\alpha}\cap \mathscr L(U)=\{\gamma\in \mathscr L(U) \mid \theta(h_1(\tilde{\gamma}),\dots,h_m(\tilde{\gamma});\alpha)\},
\end{align*}
where $h_i$ are regular functions on $U$, $\theta$ is a semi-algebraic condition, and $\tilde{\gamma}$ is the element in $\mathscr L(U)(k(\gamma))$ corresponding to a point $\gamma$ in $\mathscr L(U)$ of residue field $k(\gamma)$ (cf. \cite[Section 2.2]{DL2}). In the case when $r=0$, the unique element in the previous family is called a {\it semi-algebraic subset} of $\mathscr L(X)$.

Let $A$ be a semi-algebraic subset of $\mathscr L(X)$, and let $r$ be in $\mathbb N$. As introduced in \cite{DL2}, a function 
$$\ell: A\times \mathbb Z^r\to \mathbb Z\cup \{+\infty\}$$ 
is called {\it simple} if the family of sets $\left\{x\in A \mid \ell(x,\alpha_1,\dots,\alpha_r)=\alpha_{r+1}\right\}$, with $(\alpha_1,\dots,\alpha_{r+1})$ in $\mathbb N^{r+1}$, is a semi-algebraic family of semi-algebraic subsets of $\mathscr L(X)$. For instance, if $f$ is a regular function on $X$ and $A$ is a semi-algebraic subset of $\mathscr L(X)$, then $\ord_tf$ is a simple function on $A$. A subset of $\mathbb Z^r$ is called a {\it Presburger set} if it is defined by a finite boolean combination of sets of the form
$$\big\{\alpha\in \mathbb Z^r \mid \ell_1(\alpha)\geq 0,\ \ell_2(\alpha)\equiv 0\mod d\big\},$$
where $\ell_1$ and $\ell_2$ are polynomials over $\mathbb Z$ of degree at most 1 and $d$ is in $\mathbb N_{>0}$. In other words, a Presburger set is a subset of $\mathbb Z^r$ (for some $r$) defined by a formula in the Presburger language. If $\ell$ is a  $\mathbb Z$-valued function on $\mathbb Z^r$ whose graph is a Presburger subset of $\mathbb Z^{r+1}$, then we call $\ell$ a {\it Presburger function}.

\subsection{Equivariant motivic measure and integration}\label{EquivMotInt}
Let $X$ be an algebraic $k$-variety of pure dimenstion $d$, and $A$ a semi-algebraic subset of the arc space $\mathscr L(X)$. According to \cite{DL2}, $A$ is called {\it weakly stable at level $n$}, for $n$ in $\mathbb N$, if $A$ is a union of fibers of $\pi_n: \mathscr L(X) \to \mathscr L_n(X)$, and $A$ is called {\it weakly stable} if it is weakly stable at some level. Further, $A$ is called {\it stable at level $n$} if it is weakly stable at level $n$ and, for every $m\geq n$, the map 
\begin{align}\label{pimap}
\pi_{m+1}(\mathscr L(X))\to \pi_m(\mathscr L X))
\end{align}
is a piecewise trivial fibration over $\pi_m(A)$ with fiber $\mathbb A_k^d$; and $A$ is {\it stable} if it is stable at some level. Let $\mathscr F_X$ be the family of stable semi-algebraic subsets of $\mathscr L(X)$. Note that if $A$ is a weakly stable semi-algebraic subset of $\mathscr L(X)$ and $A$ is disjoint with $\mathscr L(X_{\Sing})$, then $A$ is in $\mathscr F_X$, where $X_{\Sing}$ is the locus of singular points of $X$. As noticed in \cite{DL2}, the family $\mathscr F_X$ is closed for finite intersection and finite union operations. A direct corollary of the definition is that if $A$ is in $\mathscr F_X$, then there exists a natural number $n$ such that, for every $m\geq n$, the identity $[\pi_m(A)]=[\pi_n(A)]\L^{(m-n)d}$ holds in $K_0(\Var_k)$, i.e., the identity 
$$[\pi_m(A)]\L^{-md}=[\pi_n(A)]\L^{-nd}$$ 
holds in $\mathscr M_k$. One puts  
$$\tilde\mu(A):=[\pi_{n}(A)]\L^{-(n+1)d},$$ 
for $A$ in $\mathscr F_X$ and $n$ the stable level of $A$, and obtains an additive measure $\tilde\mu: \mathscr F_X \to \mathscr M_k$.

Let $\hat{G}=\varprojlim G_i$ be the limit of a projective system of finite algebraic groups over a directed ordered set $(I,\leq)$. Assume $\hat{G}$ acts nicely on $\mathscr L(X)$, that is, the given action of $G_i$ on $\mathscr L_n(X)$ are good for every $i$ in $I$ and $m\geq n$, and the morphisms 
$$\pi^m_n: \mathscr L_m(X) \to \mathscr L_n(X)$$ 
are $G_i$-equivariant. Let $i$ be in $I$, and let $A$ be a semi-algebraic subset of $\mathscr L(X)$ which is {\em $G_i$-invariant stable at level $n$}, i.e., $A$ is stable at level $n$ and $\pi_m(A)$ is invariant under the action of $G_i$ for all $m\geq n$. Then the morphism (\ref{pimap}) is $G_i$-equivariant and it is a piecewise trivial fibration with fiber $\mathbb{A}^d_k$ for all $m\geq n$. By Theorem \ref{keycoro}, the identities
$$[\pi_{m+1}(A)]=[\pi_m(A)]\L^{d}$$ holds in $K_0^{G_i}(\Var_k)$, and hence $[\pi_m(A)]\L^{-md}$ is constant in $\mathscr M_k^{G_i}$ for every $m\geq n$. Putting $\tilde\mu^{G_i}(A):=[\pi_n(A)]\L^{-(n+1)d}$ we also get a $G_i$-equivariant additive measure $\tilde\mu^{G_i}: \mathscr F_X^{G_i} \to \mathscr M_k^{G_i},$ where $\mathscr F_X^{G_i}$ denotes the subfamily of $\mathscr F_X$ consisting of ${G_i}$-equivariant stable semi-algebraic subsets of $\mathscr L(X)$. The family $\{\tilde\mu^{G_i},i\in I\}$ then forms a projective system and its projective limit defines a {\em $\hat{G}$-equivariant additive measure}
\begin{align}\label{measure!}
\tilde\mu^{\hat{G}}: \mathscr F_X^{\hat{G}} \to \mathscr M_k^{\hat{G}},
\end{align}
where $\mathscr F_X^{\hat{G}}$ is a projective limit of the system $\mathscr F_X^{G_i}$. When $\hat G$ is $\hat{\mu}$, with $I=\mathbb N$, and it acts naturally on $\mathscr L(X)$ as $\lambda\cdot \gamma(t):= \gamma(\lambda t)$ for every $\lambda$ in $\mu_n$ and $\gamma$ in $\mathscr L_n(X)$, we shall write simply $\tilde\mu$ instead of $\tilde\mu^{\hat{\mu}}$.

Now, let $A$ be in $\mathscr F_X^{\hat{G}}$ and let $\ell: A\to \mathbb N$ be a simple function such that all the fibers $\ell^{-1}(n)$ of $\ell$ are in $\mathscr F_X^{\hat{G}}$. By \cite[Lemma 2.4]{DL2}, $A$ is the disjoint union of finitely many subsets $\ell^{-1}(n)$. Then we may define {\it ${\hat{G}}$-equivariant motivic integral} of $\ell$ to be
\begin{align}\label{integral}
\int_A\L^{-\ell}d\tilde\mu^{\hat{G}}:=\sum_{n\in \mathbb N}\tilde\mu^{\hat{G}}\left(\ell^{-1}(n)\right)\L^{-n},
\end{align}
which takes value in $\mathscr M_k^{\hat{G}}$.

As in \cite[Section 3.3]{DL2}, we can define the order $\ord_t\mathcal J$ of a coherent sheaf of ideals $\mathcal J$ on an algebraic $k$-variety $Z$ of pure dimension $d$, which is a simple function. Denote by $\Omega_Z^1$ the sheaf of differentials on $Z$, and by $\Omega_Z^d$ the $d$th exterior power of $\Omega_Z^1$. Let $\mathcal S$ be a coherent sheaf on $Z$ such that there exists a natural morphism of sheaves $\iota: \mathcal S \to \Omega_Z^d$. Assume that $Z$ is smooth. Let $\mathcal J(\mathcal S)$ be the sheaf of ideals on $Z$ locally generated by functions $\iota(s)/dz$ with $s$ a local section of $\mathcal S$ and $dz$ a local volume form on $Z$. Then we define $\ord_t\mathcal S:=\ord_t\mathcal J(\mathcal S)$.

\begin{theorem}\label{changeofvariable}
Let $X$ and $Y$ be algebraic $k$-varieties of pure dimension $d$, with $Y$ being smooth in addition. Let $h:Y\to X$ be a proper birational morphism. Let $\hat{G}=\varprojlim G_i$ act nicely on $\mathscr L(Y)$ and on $\mathscr L(X)$ such that all the morphisms $h_n: \mathscr L_n(Y) \to \mathscr L_n(X)$ are $G_i$-equivariant. Let $A$ be in $\mathscr F_X^{\hat{G}}$ such that $A\cap \mathscr L\left(h(E)\right)=\emptyset$, where $E$ is the exceptional locus of $h$. Let $\ell: A\to \mathbb N$ a simple function whose fibers are all in $\mathscr F_X^{\hat{G}}$. Then $h^{-1}(A)$ and the fibers of $\ell\circ h+\ord_th^*(\Omega_X^d)$ on $h^{-1}(A)$ are in $\mathscr F_Y^{\hat{G}}$, and furthermore, the identity
$$\int_A\L^{-\ell}d\tilde\mu^{\hat{G}}=\int_{h^{-1}(A)}\L^{-\ell\circ h-\ord_th^*(\Omega_X^d)}d\tilde\mu^{\hat{G}}$$
holds in $\mathscr M_k^{\hat{G}}$.
\end{theorem}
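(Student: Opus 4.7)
The plan is to mimic Denef--Loeser's proof of the non-equivariant change of variables formula in \cite{DL2}, upgrading each appeal to a piecewise trivial $\mathbb{A}^e$-fibration to its $G_i$-equivariant counterpart via Theorem \ref{keycoro}. Since by hypothesis every truncation $h_n\colon \mathscr L_n(Y) \to \mathscr L_n(X)$ is $G_i$-equivariant for each $i\in I$, all the constructions entering the proof (preimages under $h$, level sets of orders of coherent sheaves, fibers of simple functions) carry compatible $G_i$-actions, and everything will pass to the projective limit over $i$.

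First, I would verify the measurability claims. The equality $h^{-1}(\mathscr L(X)\setminus \mathscr L(h(E))) = \mathscr L(Y)\setminus \mathscr L(E)$ together with properness and birationality of $h$ gives $h^{-1}(A) \in \mathscr F_Y$. The $G_i$-equivariance of each $h_n$ implies $\pi_n(h^{-1}(A)) = h_n^{-1}(\pi_n(A))$ is $G_i$-invariant whenever $\pi_n(A)$ is, hence $h^{-1}(A) \in \mathscr F_Y^{\hat G}$. The same argument applied to each fiber $\ell^{-1}(n)$, combined with the observation that $\ord_t h^*(\Omega_X^d)$ is a simple function whose level sets are $G_i$-invariant (by $G_i$-equivariance of $h$ and canonicity of $\Omega_X^d$), shows that the fibers of $\ell\circ h + \ord_t h^*(\Omega_X^d)$ also lie in $\mathscr F_Y^{\hat G}$.

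For the integral identity, partition $h^{-1}(A)$ by the pair of simple functions $(\ell\circ h,\, \ord_t h^*(\Omega_X^d))$ into the semi-algebraic sets
\[
C_{n,e} := h^{-1}(\ell^{-1}(n)) \cap \bigl\{\ord_t h^*(\Omega_X^d) = e\bigr\}, \qquad (n,e)\in \mathbb N^2.
\]
Denef--Loeser's key geometric lemma from \cite{DL2} provides, for each fixed $(n,e)$, a truncation level $m$ at which $h_m$ sends $\pi_m(C_{n,e})$ onto $\pi_m(\ell^{-1}(n))$ with every geometric fiber isomorphic to $\mathbb A_{k(y)}^e$. Because each $C_{n,e}$ and each $\ell^{-1}(n)$ is $G_i$-invariant, $h_m$ is $G_i$-equivariant, and $G_i$ is a finite algebraic group (so its categorical quotient is automatically quasi-finite), Theorem \ref{keycoro} yields
\[
[\pi_m(C_{n,e})] = [\pi_m(\ell^{-1}(n))] \cdot \L^e \quad \text{in } K_0^{G_i}(\Var_k).
\]
Dividing by $\L^{(m+1)d}$ gives $\tilde\mu^{G_i}(C_{n,e}) = \tilde\mu^{G_i}(\ell^{-1}(n))\cdot \L^e$ in $\mathscr M_k^{G_i}$; multiplying by $\L^{-n-e}$ and summing over $(n,e)\in\mathbb N^2$ produces the stated change of variables identity in $\mathscr M_k^{G_i}$, and taking the colimit over $i\in I$ delivers the result in $\mathscr M_k^{\hat G}$.

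The main obstacle is the equivariant upgrade of Denef--Loeser's key geometric lemma: the classical version only provides a piecewise trivial fibration, and there is no reason the local trivializations should be $G_i$-equivariant. This is precisely what Theorem \ref{keycoro} is designed to bypass, since it trades the need for an equivariant trivialization for the strictly weaker hypothesis that each geometric fiber is abstractly isomorphic to an affine space. A subsidiary bookkeeping point is the $G_i$-invariance of $\ord_t h^*(\Omega_X^d)$, which reduces to the $G_i$-equivariance of $h$ together with the canonical nature of $\Omega_X^d$, and the compatibility of the partition with the measure, which is handled by additivity of $\tilde\mu^{G_i}$ on $\mathscr F_Y^{G_i}$.
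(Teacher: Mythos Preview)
Your overall strategy coincides with the paper's: partition $h^{-1}(A)$ by the pair $(\ell\circ h,\,\ord_t h^*(\Omega_X^d))$, invoke \cite[Lemma~3.4]{DL2} for the $\mathbb A^e$-fibration at a suitable jet level, and then apply Theorem~\ref{keycoro} to obtain the class identity in $K_0^{G_i}(\Var_k)$.

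There is, however, one concrete error that makes your computation as written fail. The map $h_m$ restricted to $\pi_m(C_{n,e})$ does \emph{not} surject onto all of $\pi_m(\ell^{-1}(n))$: its image is only the subset $A_{n,e}:=h_m(\pi_m(C_{n,e}))$, and as $e$ varies these images form a finite partition of $\pi_m(\ell^{-1}(n))$ (since $h$ induces a bijection on arcs away from $\mathscr L(h(E))$, each arc in $\ell^{-1}(n)$ has a unique lift with a well-defined Jacobian order). Theorem~\ref{keycoro} therefore yields $[\pi_m(C_{n,e})]=[A_{n,e}]\cdot\L^e$, hence $\tilde\mu^{G_i}(C_{n,e})=\tilde\mu^{G_i}(h(C_{n,e}))\cdot\L^e$, \emph{not} $\tilde\mu^{G_i}(\ell^{-1}(n))\cdot\L^e$. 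With your stated identity, summing $\tilde\mu^{G_i}(C_{n,e})\L^{-n-e}$ over $e$ would produce infinitely many copies of $\tilde\mu^{G_i}(\ell^{-1}(n))\L^{-n}$ and the argument collapses. With the corrected identity one recovers exactly the paper's chain of equalities:
\[
\sum_n\tilde\mu^{G_i}(\ell^{-1}(n))\L^{-n}
=\sum_{n,e}\tilde\mu^{G_i}(h(C_{n,e}))\L^{-n}
=\sum_{n,e}\tilde\mu^{G_i}(C_{n,e})\L^{-(n+e)}
=\int_{h^{-1}(A)}\L^{-\tilde\ell}\,d\tilde\mu^{G_i}.
\]
The paper also fixes a single truncation level $N$ uniformly for all $(n,e)$, using boundedness of $\tilde\ell$ from \cite[Lemma~2.4]{DL2}; your varying $m$ is harmless, but you should record that only finitely many pairs $(n,e)$ occur.
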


\begin{proof}
We assume that $A\in \mathscr F_X^{G_i}$ for some $i\in I$. The first statment that $h^{-1}(A)$ and the fibers of $\ell\circ h+\ord_th^*(\Omega_X^d)$ on $h^{-1}(A)$ are in $\mathscr F_Y^{G_i}$ is clear by \cite[Lemma 3.3]{DL2} any by the hypothesis that all $h_n$ are $G_i$-equivariant. Now, for simplicity of notation, we shall write $\tilde{A}$ for $h^{-1}(A)$, and write $\tilde{\ell}$ (resp. $\nu$) for the simple function $\ell\circ h+\ord_th^*(\Omega_X^d)$ (resp. $\ord_th^*(\Omega_X^d)$). Since $\tilde{A}$ and all the fibers of $\tilde{\ell}$ are stable, it follows from \cite[Lemma 2.4]{DL2} that the functions $\tilde{\ell}$ and $\nu$ are bounded. Choose a positive integer $N$ such that all the fibers of $\ell$ are stable at level $N$ and that $|\tilde{\ell}|\leq \frac N 2$. Define 
$$
A_n:=\pi_N(\ell^{-1}(n)), \ \tilde{A}_n:=h_N^{-1}(A_n),\ \tilde{A}_{n,e}:=\tilde{A}_n\cap \nu^{-1}(e),\ {A}_{n,e}:=h(\tilde{A}_{n,e}),
$$
for every $e\geq 0$. Then we have that $\tilde{A}_n=\pi_N((\ell\circ h)^{-1}(n))$ and that, by \cite[Lemma 3.4]{DL2}, the morphism $h|_{\tilde{A}_{n,e}}: \tilde{A}_{n,e}\to {A}_{n,e}$ is a piecewise trivial fibration with fiber $\mathbb{A}^e_k$. We then deduce from Theorem \ref{keycoro} that $[\tilde{A}_{n,e}]= [{A}_{n,e}]\L^e$ in $\mathscr M_k^{G_i}$. Therefore, we get 
\begin{align*}
\int_A\L^{-\ell}d\tilde\mu^{G_i} &=\sum_{n}[A_n]\L^{-(N+1)d-n}=\sum_{n,e}[A_{n,e}]\L^{-(N+1)d-n}\\
&\qquad =\sum_{n,e}[\tilde{A}_{n,e}]\L^{-(N+1)d-(n+e)}=\sum_{m}\left(\sum_{n+e=m}[\tilde{A}_{n,e}]\right)\L^{-(N+1)d-m}\\
&\qquad\qquad =\sum_{m}\left[\pi_N\left(\tilde{\ell}^{-1}(m)\right)\right]\L^{-(N+1)d-m}=\int_{h^{-1}(A)}\L^{-\tilde{\ell}} d\tilde\mu^{G_i},
\end{align*}
as desired.
\end{proof}

\section{Rationality of generalized motivic zeta functions}\label{rationality}
\subsection{Formal series and Hadamard product}
Let $\mathscr M$ be a commutative ring with unity which contains $\L$ and $\L^{-1}$, and let $\mathscr M[[T]]$ be the set of formal power series in $T$ with coefficients in $\mathscr M$, which is a ring and also a $\mathscr M$-module with respect to usual operations for series. Denote by $\mathscr M[[T]]_{\sr}$ the submodule of $\mathscr M[[T]]$ generated by 1 and by finite products of terms $\frac{\L^aT^b}{(1-\L^aT^b)}$ for $(a,b)$ in $\mathbb{Z}\times\mathbb{N}_{>0}$. An element of $\mathscr M[[T]]_{\sr}$ is called a {\it rational} series. By \cite{DL1}, there exists a unique $\mathscr M$-linear morphism 
$$\lim_{T\to\infty}: \mathscr M[[T]]_{\sr}\to \mathscr M$$ 
such that 
$$\lim_{T\to\infty}\frac{\L^aT^b}{(1-\L^aT^b)}=-1$$ 
for any $(a,b)$ in $\mathbb{Z}\times\mathbb{N}_{>0}$. 

More generally, we also consider the ring of formal power series $\mathscr M[[T_1,\dots,T_r]]$ in $r$ variables $(T_1,\dots,T_r)$, and its subset $\mathscr M[[T_1,\dots,T_r]]_{\sr}$ the polynomial ring with coefficients in $\mathscr M$ and in variables $\frac{\L^aT_1^{b_1}\cdots T_r^{b_r}}{1-\L^aT_1^{b_1}\cdots T_r^{b_r}}$ for $(a,b_1,\dots,b_r)$ in $\mathbb Z\times (\mathbb N^r\setminus\{(0,\dots,0)\})$. The set $\mathscr M[[T_1,\dots,T_r]]_{\sr}$ is in fact a submodule of $\mathscr M[[T_1,\dots,T_r]]$, each element of $\mathscr M[[T_1,\dots,T_r]]_{\sr}$ is called a {\it rational} series.

By definition, the Hadamard product of two formal power series $p(T)=\sum_{n\geq 1}p_nT^n$ and $q(T)=\sum_{n\geq 1}q_nT^n$ in $\mathscr M[[T]]$ is the series
\begin{align}\label{eq3.1}
p(T)\ast q(T):=\sum_{n \geq 1}p_n\cdot q_nT^n
\end{align}
in $\mathscr M[[T]]$. This product is commutative, associative, with unity $\sum_{n\geq 1}T^n$. It also preserves the rationality as seen in the following lemma.

\begin{lemma}[Looijenga \cite{Loo}, Lemma 7.6]\label{Lem2}
If $p(T)$ and $q(T)$ are rational series in $\mathscr M[[T]]$, so is $p(T)\ast q(T)$, and in this case,
$$\lim_{T\to\infty}p(T)\ast q(T)=-\lim_{T\to\infty}p(T) \cdot \lim_{T\to\infty}q(T).$$
\end{lemma}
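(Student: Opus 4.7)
The plan is to use $\mathscr M$-bilinearity of the Hadamard product together with $\mathscr M$-linearity of $\lim_{T\to\infty}$ to reduce both statements to the case of pure generators of $\mathscr M[[T]]_{\sr}$, and then to analyze the product via multi-index geometric expansion and polyhedral combinatorics. After this reduction it suffices to consider
$$p(T) = \prod_{i=1}^r \frac{\L^{a_i} T^{b_i}}{1 - \L^{a_i} T^{b_i}}, \qquad q(T) = \prod_{j=1}^s \frac{\L^{a'_j} T^{b'_j}}{1 - \L^{a'_j} T^{b'_j}},$$
with $r,s\geq 1$ and $b_i,b'_j>0$ (the degenerate case where $p$ or $q$ is a constant is trivial, since then $p_n$ or $q_n$ vanishes for all $n\geq 1$ and $p\ast q=0$). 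Expanding each factor geometrically,
$$p(T) = \sum_{\vec n \in \mathbb{N}_{>0}^r} \L^{\langle a, \vec n\rangle}\, T^{\langle b, \vec n\rangle}, \qquad q(T) = \sum_{\vec m \in \mathbb{N}_{>0}^s} \L^{\langle a', \vec m\rangle}\, T^{\langle b', \vec m\rangle},$$
and matching coefficients in \eqref{eq3.1} yields
$$p(T) \ast q(T) = \sum_{(\vec n, \vec m) \in S} \L^{\langle a, \vec n\rangle + \langle a', \vec m\rangle}\, T^{\langle b, \vec n\rangle},$$
where $S := \mathbb{N}_{>0}^{r+s} \cap H$ is the set of strictly positive integer points in the rational hyperplane $H=\{\langle b,\vec n\rangle = \langle b',\vec m\rangle\}$ of $\mathbb{R}^{r+s}$.

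Next, rationality follows from Gordan's lemma applied to the closed rational polyhedral cone $C := \mathbb{R}_{\geq 0}^{r+s} \cap H$: it admits a finite triangulation into unimodular simplicial subcones, and the integer points in each such subcone form a finitely generated free monoid. Passing from $\mathbb{N}^{r+s}$ to $\mathbb{N}_{>0}^{r+s}$ (strict positivity) by inclusion-exclusion and collecting contributions, one writes $p\ast q$ as a finite $\mathscr M$-linear combination of rational functions of the form $\L^{c_0} T^{d_0}/\prod_{i=1}^{u}(1 - \L^{c_i} T^{d_i})$. The exponents satisfy $d_i>0$, because the positivity of the entries of $b$ forces $\langle b,\cdot\rangle$ to be strictly positive on every nonzero element of $C$. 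Hence $p(T) \ast q(T) \in \mathscr M[[T]]_{\sr}$.

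For the limit identity, I would first verify the base case $r=s=1$ by a direct calculation: matching coefficients shows
$$\frac{\L^a T^b}{1 - \L^a T^b} \ast \frac{\L^{a'} T^{b'}}{1 - \L^{a'} T^{b'}} = \frac{\L^c T^d}{1 - \L^c T^d}, \qquad d = \mathrm{lcm}(b, b'),\ c = \frac{ab' + a'b}{\gcd(b, b')},$$
so $\lim_{T\to\infty}(p \ast q) = -1 = -(-1)(-1) = -\lim p \cdot \lim q$. For general $r,s$, I would compute $\lim$ term by term from the polyhedral decomposition above and check that the pieces aggregate to the prescribed value $(-1)^{r+s+1} = -\lim p\cdot \lim q$, relying on the Denef--Loeser convention $\lim\prod_{i=1}^u \frac{\L^{c_i}T^{d_i}}{1-\L^{c_i}T^{d_i}}=(-1)^u$ on the pieces produced by the triangulation. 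The main obstacle is precisely this combinatorial bookkeeping: controlling the signs from inclusion-exclusion and the factor counts in each simplicial piece so that the total telescopes to the expected limit. A clean alternative is an induction on $r+s$, using bilinearity to peel off one simple factor from $p$ and iteratively apply the base case; either route reduces the proof to a tractable polyhedral calculation.
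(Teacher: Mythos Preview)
The paper does not supply its own proof of this lemma: it is simply quoted from Looijenga \cite{Loo}, so there is no in-paper argument to compare against. I will therefore assess your proposal on its own terms.

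Your reduction by $\mathscr M$-bilinearity of $\ast$ and $\mathscr M$-linearity of $\lim_{T\to\infty}$ to the case of pure products of elementary factors is correct, and the rationality argument via the geometric expansion and Gordan's lemma on the rational polyhedral cone $C=\mathbb R_{\geq 0}^{r+s}\cap H$ is the standard route (and indeed matches the style of the polyhedral lemmas the paper invokes elsewhere, e.g.\ \cite[Lemme 2.1.5]{G} and \cite[Section 2.9]{GLM1}). The base case $r=s=1$ of the limit identity is computed correctly.

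For the general limit identity, your polyhedral route is the right one: the set $S$ is the set of lattice points in the relative interior of an $(r{+}s{-}1)$-dimensional rational cone, and the standard fact (again as in \cite{G}, \cite{GLM1}) that the limit of the corresponding generating series equals $(-1)^{\dim C}=(-1)^{r+s-1}=(-1)^{r+s+1}$ gives exactly $-\lim p\cdot\lim q$. You should state this explicitly rather than leave it as ``combinatorial bookkeeping''.

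By contrast, your proposed alternative---an induction on $r+s$ that ``peels off one simple factor from $p$'' using bilinearity---does not work as stated. The factors in $p=\prod_i \frac{\L^{a_i}T^{b_i}}{1-\L^{a_i}T^{b_i}}$ are combined by \emph{ordinary} multiplication, not by $\mathscr M$-linear combination, and the Hadamard product does not distribute over ordinary products: in general $(p_1\cdot p_2)\ast q \neq p_1\cdot(p_2\ast q)$ and it is not expressible via $p_1\ast q$ and $p_2\ast q$ alone. So there is no way to reduce $r$ by one using only bilinearity and the base case. Drop that alternative and commit to the polyhedral computation.
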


The Hadamard product may be also defined for two formal power series in several variables. Namely, for two formal power series $p=\sum p_{n_1,\dots,n_r}T_1^{n_1}\cdots T_r^{n_r}$ and $q=\sum q_{n_1,\dots,n_r}T_1^{n_1}\cdots T_r^{n_r}$ in $\mathscr M[[T_1,\dots,T_r]]$ (the sums run over $\mathbb N^r$), we define
\begin{align}\label{eq3.2}
p\ast q:=\sum p_{n_1,\dots,n_r}\cdot q_{n_1,\dots,n_r} T_1^{n_1}\cdots T_r^{n_r},
\end{align}
which is an element of $\mathscr M[[T_1,\dots,T_r]]$. Similarly as above, the Hadamard product for formal power series in several variables is also rationality preserving, commutative, associative, and its unity is $\sum_{(n_1,\dots,n_r)\in \mathbb N^r}T_1^{n_1}\cdots T_r^{n_r}$.

\subsection{Motivic zeta functions}\label{MZF}
Let $X$ be a smooth algebraic $k$-variety of pure dimension $d$. Let $f:X\to\mathbb A_k^1$ be a regular function with the zero locus $X_0$ nonempty. For $n\geq 1$, we define 
\begin{align}\label{contactloci}
\mathscr{X}_n(f):=\left\{\gamma\in \mathscr{L}_n(X) \mid f(\gamma)= t^n\mod t^{n+1}\right\}.
\end{align}
Then $\mathscr{X}_n(f)$ is naturally an $X_0$-variety and invariant under the natural action $\sigma$ of $\mu_n$ on $\mathscr L_n(X)$ given by $\lambda\cdot\gamma(t):=\gamma(\lambda t)$. For simplicity, we write $[\mathscr{X}_n(f)]$ for the class $[\mathscr{X}_n(f)\to X_0,\sigma]$ in the ring $\mathscr M_{X_0}^{\hat{\mu}}$. The {\it motivic zeta function} of $f$ is defined to be 
\begin{align}\label{eq3.6}
Z_f(T):=\sum_{n\geq 1}[\mathscr{X}_n(f)]\L^{-nd}T^n, 
\end{align}
which is a formal power series in $\mathscr M_{X_0}^{\hat{\mu}}[[T]]$. 
If $x$ is a closed point in $X_0$, by setting 
$$\mathscr{X}_{n,x}(f)=\{\gamma\in \mathscr{X}_n(f)\mid \gamma(0)=x\}$$ 
we obtain in the same way the {\it motivic zeta function of $f$ at $x$}  
\begin{align}\label{eq3.7}
Z_{f,x}(T):=\sum_{n\geq 1}[\mathscr{X}_{n,x}(f)]\L^{-nd}T^n,
\end{align} 
which is a formal power series in $\mathscr M_k^{\hat{\mu}}[[T]]$. 

\begin{remark}
We can use the new terminology and notation in Section \ref{EquivMotInt} as follows. We note that, for $n\geq 1$ and $x$ as previous, the sets $
\overline{\mathscr X}_n(f):=\left\{\gamma\in \mathscr{L}(X) \mid f(\gamma)= t^n\mod t^{n+1}\right\}$ and $\overline{\mathscr X}_{n,x}(f):=\left\{\gamma\in \overline{\mathscr X}_n(f), \gamma(0)=x\right\}$ are in the family $\mathscr F_X^{\mu_n}$, they are stable at level $n$; and furthermore, $\mathscr X_n(f)=\pi_n(\overline{\mathscr X}_n(f))$, $\mathscr X_{n,x}(f)=\pi_n(\overline{\mathscr X}_{n,x}(f))$, and with $\tilde\mu$ in Section \ref{EquivMotInt},
$$Z_{f,x}(T)=\L^d\sum_{n\geq 1}\tilde\mu(\overline{\mathscr X}_{n,x}(f))T^n.$$
\end{remark}

As in Denef-Loeser \cite{DL2}, \cite{DL3}, to see the rationality of the series (\ref{eq3.6}) and (\ref{eq3.7}) we consider a log-resolution $h: Y\to X$ of $X_0$. The exceptional divisors and irreducible components of the strict transform for $h$ will be denoted by $E_i$, where $i$ is in a finite set $J$. For every nonempty $I\subseteq J$, we put $E_I^{\circ}=\big(\bigcap_{i\in I}E_i\big) \setminus\bigcup_{j\not\in I}E_j,$ and consider an affine covering $\{U\}$ of $Y$ such that on each piece $U\cap E_I^{\circ}\not=\emptyset$ the pullback of $f$ has the form $u\prod_{i\in I}y_i^{N_i}$, with $u$ a unit and $y_i$ a local coordinate defining $E_i$. Denote by $m_I$ the greatest common divisor of $N_i$, with $i$ in $I$. Denef and Loeser \cite{DL1} study the unramified Galois covering $\pi_I:\widetilde{E}_I^{\circ}\to E_I^{\circ}$ with Galois group $\mu_{m_I}$ defined locally with respect to $\{U\}$ as follows 
$$\left\{(z,y)\in \mathbb{A}_k^1\times(U\cap E_I^{\circ}) \mid z^{m_I}=u(y)^{-1}\right\}.$$ 
The local pieces are glued over $\{U\}$ as in the proof of \cite[Lemma 3.2.2]{DL1} to get $\widetilde{E}_I^{\circ}$ and $\pi_I$ as mentioned, and the definition of the covering $\pi_I$ is independent of the choice of $\{U\}$. Moreover, $\widetilde{E}_I^{\circ}$ is endowed with a $\mu_{m_I}$-action by multiplication of the $z$-coordinate with elements of $\mu_{m_I}$, which gives rise to an element $[\widetilde{E}_I^{\circ}]=[\widetilde{E}_I^{\circ}\to E_I^{\circ}\to X_0]$ in $\mathscr M_{X_0}^{\hat\mu}$ (cf. \cite{DL3}). For every $i$ in $J$, we denote by $\nu_i-1$ the multiplicity of $E_i$ in the canonical divisor of $h$. 

\begin{theorem}[Denef-Loeser \cite{DL1}]
With the previous notation and hypothesis, we have 
\begin{align*}
Z_f(T)=\sum_{\emptyset\not=I\subseteq J}(\mathbb{L}-1)^{|I|-1}[\widetilde{E}_I^{\circ}]\prod_{i\in I}\frac{\L^{-\nu_i}T^{N_i}}{1-\L^{-\nu_i}T^{N_i}}.
\end{align*}
In other words, the motivic zeta function of $f$ is a rational series.
\end{theorem}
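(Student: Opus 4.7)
The plan is to derive the closed formula by applying the change of variables formula of Theorem \ref{changeofvariable} to the proper birational morphism $h\colon Y\to X$ and then stratifying the arc space of $Y$ according to the normal crossings divisor $h^{-1}(X_0)=\sum_{i\in J}N_iE_i$. Since $h$ is an isomorphism outside the exceptional locus, one has, for each $n\geq 1$,
$$\tilde\mu\bigl(\overline{\mathscr X}_n(f)\bigr)=\tilde\mu\bigl(h^{-1}(\overline{\mathscr X}_n(f))\bigr)\cdot \L^{\text{(jacobian correction)}},$$
which, after inserting $\ord_t h^*(\Omega_X^d)=\sum_{i\in I(\gamma)}(\nu_i-1)n_i(\gamma)$ on each stratum, converts $Z_f(T)$ into a sum, indexed by nonempty $I\subseteq J$, of integrals over the preimage in $\mathscr L(Y)$ of arcs meeting the stratum $E_I^{\circ}$.

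Next, fix a nonempty $I\subseteq J$ and work locally on an affine chart $U$ on which $f\circ h=u\prod_{i\in I}y_i^{N_i}$ with $u$ a unit. For an arc $\gamma$ on $U$ with $\pi_0(\gamma)\in E_I^{\circ}$, write $n_i:=\ord_t y_i(\gamma)$ for $i\in I$. The condition $f\circ h(\gamma)=t^n\mod t^{n+1}$ decomposes into the linear equality $\sum_{i\in I}n_iN_i=n$ together with the condition $\overline{\ac}(u(\gamma))\cdot\prod_{i\in I}\overline{\ac}(y_i(\gamma))^{N_i}=1$ on angular components; the latter carves out precisely the fiber of the $\mu_{m_I}$-cover $\pi_I\colon\widetilde{E}_I^{\circ}\to E_I^{\circ}$ above $\pi_0(\gamma)$. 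I would then use Theorem \ref{keyco} (i.e.\ Theorem \ref{keycoro}) applied $G_i=\mu_{n}$-equivariantly, to identify the level-$n$ truncation of the corresponding stable semi-algebraic subset as a piecewise trivial fibration over $\widetilde{E}_I^{\circ}$ with fiber an affine space of explicit dimension, getting in $\mathscr M_{X_0}^{\hat\mu}$
$$\tilde\mu\bigl(\{\gamma: \pi_0(\gamma)\in E_I^{\circ},\ (n_i)=\mathbf n\}\cap\overline{\mathscr X}_n(f)\bigr)=(\L-1)^{|I|-1}[\widetilde{E}_I^{\circ}]\cdot\L^{-d-\sum_i\nu_i n_i}.$$
The factor $(\L-1)^{|I|-1}$ reflects the freedom of the angular components of the $y_i$ once one of them is determined by the cover relation, and the factor $\L^{-\sum_i\nu_i n_i}$ combines $\L^{-\sum_i n_i}$ from the truncation with the jacobian correction $\L^{-\sum_i(\nu_i-1)n_i}$ from the change of variables formula.

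Finally I would sum over $\mathbf n=(n_i)_{i\in I}\in\mathbb N_{>0}^{|I|}$ and use the identity $\sum_{n_i\geq 1}\L^{-\nu_i n_i}T^{N_i n_i}=\L^{-\nu_i}T^{N_i}/(1-\L^{-\nu_i}T^{N_i})$; combining the contributions for the different $I$ (and using that the trivial contribution of the isomorphism part $Y\setminus\bigcup E_i\to X\setminus X_0$ does not contribute to $\overline{\mathscr X}_n(f)$ for $n\geq 1$) yields the announced closed formula. The main obstacle I expect is step three: making precise, in an equivariant way, that the angular-component condition on $u(\gamma)$ globalizes (across the affine charts on $Y$) to exactly the $\mu_{m_I}$-torsor $\widetilde{E}_I^{\circ}\to E_I^{\circ}$ of Denef--Loeser, and that the natural $\mu_n$-action $\lambda\cdot\gamma(t)=\gamma(\lambda t)$ on arcs induces, through the truncation $\mu_n\to\mu_{m_I}$, precisely the multiplicative $\mu_{m_I}$-action on the $z$-coordinate of $\widetilde{E}_I^{\circ}$. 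Once this identification is set up, the application of Theorem \ref{keycoro} within the projective system defining $\tilde\mu^{\hat\mu}$ in \eqref{measure!} is routine, and the geometric series sums to the stated expression, proving rationality.
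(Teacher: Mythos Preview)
The paper does not give its own proof of this statement: it is quoted as a result of Denef--Loeser \cite{DL1} and stated without proof, serving only as background for the definition of $\mathscr S_f$ and $\mathscr S_{f,x}$. So there is nothing in the paper to compare your argument against directly.

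That said, your sketch is the standard Denef--Loeser argument (essentially \cite[Theorem 2.4]{DL3} in the equivariant setting), and the paper itself reproduces this computation almost verbatim in the proof of Theorem~\ref{Thm3.4}: there one fixes a log-resolution $h$ of the zero locus of an auxiliary product $g=f\prod f_i$, stratifies by $E_I^\circ$, solves the diophantine system $\sum_j k_jN_j(f)=n$, identifies the angular-component locus with $(\L-1)^{|I|-1}[\widetilde E_I^\circ]$, and invokes \cite[Lemma 3.4]{DL2} for the affine-fiber piecewise trivialization. Your use of Theorem~\ref{keycoro} to make the fibration identity hold in $\mathscr M_{X_0}^{\hat\mu}$ rather than just $\mathscr M_{X_0}$ is exactly the role that theorem plays in the paper's framework. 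The ``main obstacle'' you flag---globalizing the angular-component condition to the cover $\widetilde E_I^\circ$ and matching the $\mu_n$-action with the $\mu_{m_I}$-action on the $z$-coordinate---is handled in \cite[Lemma 3.2.2]{DL1} and \cite[Lemma 2.5]{DL3}, which the paper cites at the corresponding step of the proof of Theorem~\ref{Thm3.4}. So your outline is correct and aligned with the literature the paper relies on.
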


An analogous formula can be also obtained for $Z_{f,x}(T)$ in (\ref{eq3.7}), so it is a rational series. The following element of $\mathscr M_{X_0}^{\hat{\mu}}$,
$$\mathscr S_f:=-\lim_{T\to\infty}Z_f(T)=\sum_{\emptyset\not=I\subset J}(1-\mathbb{L})^{|I|-1}[\widetilde{E}_I^{\circ}],$$ 
is called the {\it motivic nearby cycles} of $f$. The element $\mathscr S_{f,x}:=-\lim_{T\to\infty}Z_{f,x}(T)$ of $\mathscr M_k^{\hat{\mu}}$, which equals $(\{x\}\hookrightarrow X_0)^*\mathscr S_f$, is called the {\it motivic Milnor fiber} of $f$ at $x$.

\subsection{Generalizations}
For simplicity of performance, we only consider generalizations of the motivic zeta functions in the case where the base variety is $\Spec k$. 
Recall that a semi-algebraic condition $\theta$ is a finite boolean combination of the conditions of the forms
\begin{equation}\label{eqadd} 
\begin{gathered}
\ord_tf_1(x)\geq \ord_tf_2(x)+\ell_1(\alpha),\quad \ord_tf_3(x)\equiv \ell_2(\alpha)\mod d,\\ 
\Phi(\overline{\ac}(g_1(x)),\dots,\overline{\ac}(g_n(x)))=0,
\end{gathered}
\end{equation}
where $f_i$, $g_j$, $\Phi$ are polynomials over $k$, $\ell_1$ and $\ell_2$ are polynomials over $\mathbb Z$ of degree $\leq 1$, $x=(x_1,\dots,x_m)$ are free variables over $K(\!(t)\!)$ (with $K$ being any algebraically closed field containing $k$), $\alpha=(\alpha_1,\dots,\alpha_r)$ are free variables over $\mathbb Z$, and $d$ is in $\mathbb N_{>0}$. Suggested from \cite[Section 14.5]{CL}, we want to consider a so-called $k[t]$-semi-algebraic condition. A {\it $k[t]$-semi-algebraic} condition $\theta'$ is defined in the same way as the above $\theta$ but with $f_i$ and $g_j$ polynomials over $k[t]$ (instead of over $k$). Note that sometimes a semi-algebraic condition may be equivalent to a $k[t]$-semi-algebraic condition, that is, they may define the same semi-algebraic subset of $\mathscr L(X)$. For instance, if $f$ is a polynomial over $k$, the semi-algebraic condition 
$$\ord_tf(x)=n \wedge \overline{\ac}f(x)=1$$ 
and the $k[t]$-semi-algebraic condition 
\begin{align}\label{eq3.8}
\ord_t(f(x)-t^n)\geq \ord_t(t^n)+1
\end{align} 
are equivalent. Let us contemporarily assume that $X=\mathbb A_k^d$. Let $A$ be a semi-algebraic subset of $\mathscr L(X)$ which is defined by a $k[t]$-semi-algebraic condition $\varphi$. For $n\geq 1$, let $\varphi[n]$ denote the $k[t]$-semi-algebraic condition obtained from $\varphi$ by replacing everywhere $t$ by $t^n$. For instance, with a polynomial $f$ over $k$, if $\varphi$ is the $k[t]$-semi-algebraic condition
$$\ord_t(f(x)-t)\geq \ord_t(t)+1,$$ 
then $\varphi[n]$ is the condition (\ref{eq3.8}). If $A$ is a stable semi-algebraic subset of $\mathscr L(X)$ which is defined by a semi-algebraic condition $\theta$, and if $\theta$ is equivalent to a $k[t]$-semi-algebraic condition $\varphi$, then the subset $A[n]$ defined by $\varphi[n]$ is also a stable semi-algebraic subset of $\mathscr L(X)$. We can extend the definition to any algebraic $k$-variety $X$ by using a covering by affine open subsets. In this case, the group $\mu_n$ acts naturally on $A[n]$ in such a way that $\lambda\cdot \gamma(t)=\gamma(\lambda t)$, so we can take the $\hat\mu$-equivariant motivic measure $\tilde{\mu}(A[n])$ of $A[n]$, which is an element of $\mathscr M_k^{\hat\mu}$.

\begin{conjecture}\label{conj2}
Let $X$ be an algebraic $k$-variety, and let $A=\{A_{\alpha}\mid \alpha=(\alpha_1,\dots,\alpha_r)\in \mathbb N^r\}$ be a semi-algebraic family of semi-algebraic subsets of $\mathscr L(X)$ where there exists a covering of $X$ by affine open subsets $U$ such that a semi-algebraic condition defining each $A_{\alpha}\cap \mathscr L(U)$ is equivalent to a $k[t]$-semi-algebraic condition. Assume $A_{\alpha}$ is weakly stable (hence stable) and disjoint with $\mathscr L(X_{\Sing})$, for every $\alpha$ in $\mathbb N^r$ (hence $A_{\alpha}[n]$ is in $\mathscr F_X^{\mu_n}$ for every $\alpha$ in $\mathbb N^r$, $n\geq 1$). Then the formal power series
$$Z_A(T_0,T_1,\dots,T_r):=\sum_{(n,\alpha)\in \mathbb N^{r+1}}\tilde{\mu}\left(A_{\alpha}[n]\right)T_0^nT_1^{\alpha_1}\cdots T_r^{\alpha_r}$$
is a rational series, i.e., an element of $\mathscr M_k^{\hat{\mu}}[[T_0,T_1,\dots,T_r]]_{\sr}$.
\end{conjecture}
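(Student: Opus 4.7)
The plan is to mirror the Denef--Loeser program used for the ordinary motivic zeta function, extended to handle the extra parameters $\alpha$ and the $k[t]$-semi-algebraic nature of the defining conditions. First I would reduce to the case where $X$ is affine, using the affine cover hypothesis and additivity of $\tilde\mu$, and then to the case where $X$ is smooth, using the assumption $A_\alpha\cap\mathscr L(X_{\Sing})=\emptyset$. Let $f_{1},\dots,f_{s},g_{1},\dots,g_{\ell}\in k[t][x]$ be the finitely many polynomials appearing in a $k[t]$-semi-algebraic condition $\varphi$ defining $A_\alpha\cap\mathscr L(U)$. I would then take a log-resolution $h\colon Y\to X$ which, simultaneously with the product $\prod_i f_i\cdot\prod_j g_j$ viewed as a polynomial in $x$ with coefficients in $k[[t]]$, has simple normal crossings support. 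The change of variables formula Theorem~\ref{changeofvariable} transports $\tilde\mu(A_\alpha[n])$ to an integral on $\mathscr L(Y)$, in which every pulled-back polynomial is locally of the form $u\cdot\prod_{i\in I}y_i^{N_i}$ with $u$ a unit along $E_I^\circ$.

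Next, stratify $\mathscr L(Y)=\bigsqcup_{\mathbf n\in\mathbb N^{J}}\mathscr L(Y)_{\mathbf n}$ by multi-orders along the components $\{E_i\}_{i\in J}$. On each stratum the order-conditions of the first two types in (\ref{eqadd}), after substituting $t\mapsto t^n$, translate into Presburger conditions on $(n,\alpha,\mathbf n)$; the Jacobian contributions from $h$ contribute $\mathbb L^{-\sum_i\nu_i n_i}$. The angular-component condition $\Phi(\overline{\ac}(g_1(x)),\dots,\overline{\ac}(g_n(x)))=0$, after pulling back, becomes a polynomial condition on the product of the leading unit $u_j(0)$ with the initial coefficients of the local coordinates $y_i$; this cuts out a constructible subset of the $\mu_{m_I}$-cover $\widetilde{E}_I^\circ$ together with a constructible subset of an additional finite-dimensional jet space, both carrying a natural $\hat\mu$-action as in Denef--Loeser. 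Writing the contribution to the measure in the form $[W_{I,\alpha}]\cdot\mathbb L^{-\sum_i\nu_i n_i-e}$ (with the exponent $e$ linearly affine in $(n,\alpha,\mathbf n)$), the sum over $\mathbf n\in\mathbb N^{J}_{>0}$ inside a given stratum yields a finite product of geometric series of the shape
\[
\frac{\mathbb L^{a}\,T_0^{b_0}T_1^{b_1}\cdots T_r^{b_r}}{1-\mathbb L^{a}\,T_0^{b_0}T_1^{b_1}\cdots T_r^{b_r}},
\]
the coefficients $a,b_k\in\mathbb Z$ depending on $(I,\Phi,h)$.

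Finally, to assemble the pieces into a single rational series in $\mathscr M_k^{\hat\mu}[[T_0,\dots,T_r]]_{\sr}$, I would package the Presburger sums and the constructible (angular) data using the multivariable Hadamard product (\ref{eq3.2}) together with Lemma~\ref{Lem2}: sums of monomials $T_0^nT_1^{\alpha_1}\cdots T_r^{\alpha_r}$ over a Presburger set form a rational series by the standard Presburger--rationality argument of \cite{DL2}, the constructible class $[W_{I,\alpha}]$ depends on $\alpha$ only modulo a fixed integer (coming from the congruences defining $\varphi$), and the Hadamard product with the geometric series extracted above gives again a rational series.

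The main obstacle, and the reason Conjecture~\ref{conj2} is only a conjecture whereas Theorem~\ref{Thm3.4} treats the subcase without angular-component conditions, is the uniform handling of $\Phi(\overline{\ac}(g_1),\dots,\overline{\ac}(g_n))=0$: the class $[W_{I,\alpha}]$ may \emph{a priori} depend on $\mathbf n$ through the way $\Phi$ decomposes after extracting the monomial leading parts, and making it depend only on $I$ and on $\alpha$ modulo a fixed modulus is delicate. A viable strategy is to refine the log-resolution $h$ so that $\Phi$, when evaluated on the normal forms of the $g_j$, itself becomes a product of a monomial in the $y_i$ and a unit; this reduces the third condition type to the first two, at which point the argument of Theorem~\ref{Thm3.4} applies verbatim. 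Whether such a refinement exists uniformly in $\Phi$ is the genuinely new geometric content required to upgrade Theorem~\ref{Thm3.4} to the full Conjecture~\ref{conj2}.
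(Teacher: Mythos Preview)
The paper does not prove this statement: it is posed as Conjecture~\ref{conj2}, and only the special case recorded as Theorem~\ref{Thm3.4} is established. You recognise this yourself in your final paragraph, so your write-up is really a proof \emph{sketch} together with an honest diagnosis of the missing ingredient, not a claimed proof.

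For the special case actually handled in the paper, your outline matches the authors' argument closely: reduce to smooth affine $X$, take a log-resolution $h$ of the product of the relevant polynomials, stratify the arc space by contact orders along the exceptional components, show that on each stratum the measure is $(\L-1)^{|I|-1}[\widetilde E_I^\circ]\,\L^{-\sum_j k_j\nu_j}$, and conclude rationality by summing Presburger-indexed monomials via \cite[Lemma~5.2]{DL2}. One cosmetic difference is that you invoke the equivariant change of variables (Theorem~\ref{changeofvariable}), whereas the paper computes directly with \cite[Lemma~3.4]{DL2} in the style of \cite{DL3}; these are equivalent here.

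Two genuine gaps remain in your sketch beyond the angular-component issue you flag. First, in the full conjecture the polynomials $f_i,g_j$ lie in $k[t][x]$, not $k[x]$; a log-resolution of $X$ over $k$ need not put $f_i\circ h$ into monomial-times-unit form uniformly in $t$, so your sentence ``take a log-resolution \dots\ has simple normal crossings support'' hides a real difficulty that does not arise in Theorem~\ref{Thm3.4} (where the $A_\alpha$ are defined by $k$-polynomials and all $t$-dependence enters through the single condition $f(\gamma)\equiv t^n$). Second, your proposed fix for the angular part---refine $h$ so that $\Phi$ applied to the normal forms of the $g_j$ becomes monomial-times-unit---cannot work in general: $\Phi$ is an arbitrary polynomial evaluated on angular components, and its vanishing locus on the exceptional stratum is typically not a divisor at all, so no further blow-up will reduce it to an order condition. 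This is precisely why the paper restricts Theorem~\ref{Thm3.4} to conditions of the first two types and leaves the general statement as a conjecture.
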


In what follows we are going to prove Conjecture \ref{conj2} in a special case.
 
\begin{theorem}\label{Thm3.4}
Let $X$ be an algebraic $k$-variety and $f$ a regular function on $X$. Let $A_{\alpha}$, $\alpha$ in $\mathbb N^r$, be a semi-algebraic family of semi-algebraic subsets of $\mathscr L(X)$ such that, for every covering of $X$ by affine open subsets $U$, any semi-algebraic condition defining $A_{\alpha}\cap \mathscr L(U)$ contains only conditions of two first forms in (\ref{eqadd}). Assume that, for every $\alpha$ in $\mathbb N^r$, $A_{\alpha}$ is weakly stable (hence stable) and disjoint with $\mathscr L(X_{\Sing})$. For $n\geq 1$, we put 
$$A_{n,\alpha}:=\left\{\gamma\in A_{\alpha} \mid f(\gamma)=t^n\mod t^{n+1}\right\},$$
which is in $\mathscr F_X^{\mu_n}$ for every $\alpha$ in $\mathbb N^r$, $n\geq 1$. Then the formal power series
$$Z(T_0,T_1,\dots,T_r):=\sum_{(n,\alpha)\in \mathbb N^{r+1}}\tilde{\mu}\left(A_{n,\alpha}\right)T_0^nT_1^{\alpha_1}\cdots T_r^{\alpha_r}$$
is an element of $\mathscr M_k^{\hat{\mu}}[[T_0,T_1,\dots,T_r]]_{\sr}$.
\end{theorem}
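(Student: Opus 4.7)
My strategy is to reduce the computation of the $\tilde\mu$-measures $\tilde\mu(A_{n,\alpha})$ to a Presburger sum on a log-resolution, using the $\hat\mu$-equivariant change of variables formula (Theorem \ref{changeofvariable}). Since each $A_\alpha$ is disjoint from $\mathscr L(X_{\Sing})$ and since only the two first forms of (\ref{eqadd}) are involved, the collection of $k$-polynomials $f_1, f_2, f_3$ appearing in the defining conditions for the finitely many pieces $A_\alpha \cap \mathscr L(U)$ is finite (the semi-algebraic condition defining the family depends finitely on $\alpha$ via the linear forms $\ell_1, \ell_2$). I thus choose a proper birational $h\colon Y\to X$, with $Y$ smooth of pure dimension $d$, such that $h^{-1}(X_{\Sing})$, $h^{-1}(f^{-1}(0))$, and the pullbacks of all these $f_i$ become, together with the Jacobian ideal $h^*\Omega_X^d$, locally monomial in a suitable normal crossings divisor $\sum_{i\in J} E_i$. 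The natural $\hat\mu$-action $\lambda\cdot \gamma(t)=\gamma(\lambda t)$ lifts through $h$ to $\mathscr L(Y)$, the truncation maps $h_n\colon\mathscr L_n(Y)\to\mathscr L_n(X)$ are $\mu_n$-equivariant, and Theorem \ref{changeofvariable} gives
$$\tilde\mu(A_{n,\alpha})=\int_{h^{-1}(A_{n,\alpha})}\mathbb{L}^{-\ord_t h^*(\Omega_X^d)}\,d\tilde\mu^{\hat\mu}$$
in $\mathscr M_k^{\hat\mu}$.

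\textbf{Local monomialization.} On an affine chart $U$ of $Y$ meeting $E_I^\circ$ we have local coordinates $(y_1,\ldots,y_d)$ such that $E_i=\{y_i=0\}$ for $i\in I$, $f\circ h=u\prod_{i\in I}y_i^{N_i}$ with $u$ a unit, each $f_j\circ h=u_j\prod_{i\in I}y_i^{N_{j,i}}$, and $\ord_t h^*(\Omega_X^d)=\sum_{i\in I}(\nu_i-1)\ord_t y_i$. Writing $k_i:=\ord_t y_i(\gamma)$, the defining conditions of $A_{n,\alpha}\cap\mathscr L(U)$ become:
\begin{itemize}
\item the order condition $\sum_{i\in I}N_i k_i = n$ together with the angular component relation $\overline{\ac}(u)\prod_{i\in I}\overline{\ac}(y_i)^{N_i}=1$ coming from $f(\gamma)=t^n\bmod t^{n+1}$;
\item inequalities $\sum_i N_{j_1,i}k_i \geq \sum_i N_{j_2,i}k_i + \ell_1(\alpha)$ and congruences $\sum_i N_{j_3,i}k_i \equiv \ell_2(\alpha)\pmod d$, one such finite list for each piece of the defining semi-algebraic condition.
\end{itemize}
The resulting subset of $(k_1,\ldots,k_d,n,\alpha)\in\mathbb N^d\times \mathbb N\times\mathbb N^r$ is therefore a Presburger set $P$. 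Slicing by fixed $(k_i)_{i\in I}$, the fiber of $\pi_N$ for large $N$ is a $\mu_n$-equivariant affine bundle over the $\mu_{m_I}$-Galois cover $\widetilde E_I^\circ$ (with $m_I=\gcd_{i\in I}N_i$), so Theorem \ref{keycoro} lets me evaluate its class in $\mathscr M_k^{\hat\mu}$ as $[\widetilde E_I^\circ](\mathbb L-1)^{|I|-1}\mathbb L^{\text{(linear in }k_i)}$ times $\mathbb L^{-(N+1)d}$, in full analogy with the Denef–Loeser computation of $Z_f(T)$ recalled in Section \ref{MZF}.

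\textbf{Summation and rationality.} Putting things together and passing to the union over $I$, the whole series becomes a finite $\mathscr M_k^{\hat\mu}$-linear combination of sums of the form
$$\sum_{(k,n,\alpha)\in P}[\widetilde E_I^\circ]\,\mathbb L^{-\sum_{i\in I}\nu_i k_i}\,T_0^n T_1^{\alpha_1}\cdots T_r^{\alpha_r},$$
where $P\subset \mathbb N^{|I|+1+r}$ is Presburger and the exponent on $\mathbb L$ is a linear form on $P$. The standard fact that a series indexed by a Presburger set, with summand a monomial in $\mathbb L$ and in the $T_j$ whose exponents are integer linear forms on that set, is a rational series in $\mathscr M_k^{\hat\mu}[[T_0,\ldots,T_r]]_{\sr}$ (this is proven by decomposing $P$ into a finite disjoint union of simplicial cones plus a linear part and summing geometric series, exactly as in \cite{DL2} and as used implicitly in Section \ref{MZF}), then finishes the proof.

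\textbf{Main obstacle.} The only delicate point is bookkeeping of the $\hat\mu$-equivariance: I must verify that, on each stratum $\widetilde E_I^\circ$, the $\mu_n$-action on the truncation $\pi_N(\overline{\mathscr X}_n(f)\cap h^{-1}(A_\alpha))$ factors through the $\mu_{m_I}$-action on $\widetilde E_I^\circ$ that appears in the class $[\widetilde E_I^\circ]\in\mathscr M_{X_0}^{\hat\mu}$, with the extra semi-algebraic constraints from $A_\alpha$ intersecting affinely. This is essentially Denef–Loeser's analysis pushed through Theorem \ref{keycoro}, but it must be done uniformly in $\alpha$ because the linear forms $\ell_1(\alpha),\ell_2(\alpha)$ move the stratification only within a fixed finite combinatorial type. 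Once uniformity is secured, rationality is a purely Presburger-theoretic consequence.
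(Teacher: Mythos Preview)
Your proposal is correct and follows essentially the same route as the paper's proof. Both arguments reduce to a log-resolution that monomializes $f$ together with the finitely many polynomials $f_i$ appearing in the defining condition, translate the semi-algebraic constraints into a Presburger set in the contact orders $(k_j)_{j\in I}$ along the exceptional strata, identify the equivariant class of each contact stratum with $(\mathbb L-1)^{|I|-1}[\widetilde E_I^\circ]$, and conclude rationality from the Presburger structure.

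Two cosmetic differences are worth noting. First, the paper carries out the computation on $Y$ directly via \cite[Lemma~3.4]{DL2} and \cite[Lemma~2.5]{DL3} rather than invoking the packaged change of variables (Theorem~\ref{changeofvariable}); your route through Theorem~\ref{changeofvariable} is logically equivalent, but you should make explicit that $A_{n,\alpha}\cap\mathscr L(h(E))=\emptyset$, which follows because $h(E)$ lies in $X_{\Sing}\cup f^{-1}(0)\cup\bigcup_i f_i^{-1}(0)$ and arcs in $A_{n,\alpha}$ have $f(\gamma)\neq 0$ and finite $\ord_t f_i$ (the residual locus contributes nothing by additivity, exactly as in the paper's decomposition into the sets $D_{n,\beta}$). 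Second, for the rationality step the paper introduces an auxiliary variable $T_{r+1}$ tracking $\sum_j k_j\nu_j$, applies \cite[Lemma~5.2]{DL2} to the resulting integer-coefficient series, and then specializes $T_{r+1}\mapsto\mathbb L^{-1}$; your ``standard fact'' is precisely this device, so citing \cite[Lemma~5.2]{DL2} would make the step self-contained.
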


\begin{proof}
With the same reason as in the proof of \cite[Theorem 5.1$^\prime$]{DL2}, we may assume that $X$ is smooth and affine of dimension $d$. Let $\theta$ be a semi-algebraic condition which defines $A_{\alpha}$. Let $f_i$, $1\leq i\leq m$, be all the polynomials in $k[x_1,\dots,x_e]$ (for some $e$) occurring in $\theta$ (we may assume that $X$ is a closed subvariety of $\mathbb A_k^e$). By the hypothesis, $\theta$ contains only conditions of first two forms in (\ref{eqadd}), so it is of the form 
\begin{align*}
\theta=\theta'(\ord_tf_1,\dots,\ord_tf_m,\alpha),
\end{align*}
where $\theta'$ defines a Presburger subset of $\mathbb Z^{m+r}$ (i.e., $\theta'$ is a formula in the Presburger language). 
For every $\beta=(\beta_1,\dots,\beta_m)$ in $\mathbb N^m$ and $n$ in $\mathbb N_{>0}$, we put 
$$D_{\beta}:=\left\{\gamma\in \mathscr L(X)\mid \ord_tf_i(z_1(\gamma),\dots,z_e(\gamma))=\beta_i, 1\leq i\leq m\right\}$$
and 
$$D_{n,\beta}:=\left\{\gamma\in D_{\beta} \mid f(\gamma)=t^n\mod t^{n+1}\right\},$$
where $z_j$, $1\leq j\leq e$, are regular functions on $X$. We observe that $D_{n,\beta}$ is invariant under the $\mu_n$-action $\lambda\cdot \gamma(t)=\gamma(\lambda t)$. Then we get the decomposition
$$A_{n,\alpha}=\bigsqcup_{\beta\in\mathbb N^m,\theta'(\beta,\alpha)}D_{n,\beta},$$
hence the identity
\begin{align}\label{measure}
\tilde{\mu}(A_{n,\alpha})=\sum_{\beta\in\mathbb N^m,\theta'(\beta,\alpha)}\tilde{\mu}(D_{n,\beta})
\end{align}
in $\mathscr M_k^{\hat\mu}$. Since $D_{n,\beta}$ is stable of level $N:=n+\sum_{i=1}^m\beta_i$, we have $\tilde{\mu}(D_{n,\beta})=[\pi_N(D_{n,\beta})]\L^{-Nd}$.

Define 
\begin{align}\label{smallg}
g:=f\prod_{i=1}^mf_i
\end{align}
and consider a log-resolution $h: Y\to X$ of the zero locus $X_0(g)$ of $g$. We use the notation about $h$ as in Section \ref{MZF}. In particular, we consider an affine covering $\{U\}$ of $Y$ such that, on $U\cap E_I^{\circ}\not=\emptyset$, with $h(E_I^{\circ})$ contained in $X_0(g)$, we have
\begin{align}
f\circ h=u\prod_{j\in I}y_j^{N_j(f)},\ \ f_i\circ h=u_i\prod_{j\in I}y_j^{N_j(f_i)},\ 1\leq i\leq m,
\end{align}
where $u$ and $u_i$ do not vanish on $U$, and for each $j$, $y_j$ is a local coordinate defining $E_j$. We now use the idea  in the proof of \cite[Lemma 2.5]{DL3} and slightly modify it. Consider the solutions $(k_j)_{j\in I}\in \mathbb N_{\geq 1}^I$ of the system of diophantine equations
\begin{align}\label{eq3.12}
\sum_{j\in I}k_jN_j(f)=n,\ \  \sum_{j\in I}k_jN_j(f_i)=\beta_i,\ 1\leq i\leq m.
\end{align}
When emphasising the free coefficient vector $(n,\beta)^t$ in this system (\ref{eq3.12}) we write $(\ref{eq3.12})_{n,\beta}$ for it. For such a solution $(k_j)_{j\in I}$ in $\mathbb N_{\geq 1}^I$ of (\ref{eq3.12}), we put 
$$U_{(k_j)}:=\left\{\gamma\in\mathscr L_N(U) \mid \overline{\ac}f(h_{n*}(\gamma))=1, \ord_ty_j(\gamma)=k_j, j\in I\right\}.$$
Similarly as in the proof of \cite[Lemma 2.5]{DL3} we obtain the identity
$$[U_{(k_j)}]=[V_I]\L^{Nd-\sum_{j\in I}k_j},$$
which in fact lies in $\mathscr M_k^{\hat\mu}$, where 
$$V_I:=\Big\{((c_j)_{j\in I},y)\in \mathbb G_{m,k}^I\times_k (E_I^{\circ}\cap U)\mid u(y)\prod_{j\in I}c_j^{N_j(f)}=1\Big\}$$
whose class in $\mathscr M_k^{\hat\mu}$ is nothing but $(\L-1)^{|I|-1}[\widetilde{E}_I^{\circ}\cap U]$. Using Lemma 3.4 of \cite{DL2} (or more particularly, Lemma 2.2 of \cite{DL3}), the set $U_{(k_j)}$ modified with $N$ replaced by a sufficiently large natural number $l$ and augmented by the condition $\ord_t\det\Jac_h(x)=a$ is a piecewise trivial fibration with fiber $\mathbb A_k^a$ onto a subset of $\pi_l(D_{n,\beta})$. Note that, on $U\cap E_I^{\circ}\not=\emptyset$, with $h(E_I^{\circ})\subseteq X_0(g)$, we have $\ord_t\det\Jac_h=v\prod_{j\in I}y_j^{\nu_j-1}$, with $v$ a unit on $U$. As in the proof of \cite[Theorem 2.4]{DL3}, we may glue $U_{(k_j)}$ and use the properties that $\tilde{\mu}$ is additive and that $\pi_N^l$ is a locally trivial fibration as $X$ and $Y$ are smooth. We thus get the identity
$$[\pi_N(D_{n,\beta})]=\L^{Nd}\sum_{\begin{smallmatrix}\emptyset\not=I\subseteq J\\ h(E_I^{\circ})\subseteq X_0(g)\end{smallmatrix}}(\L-1)^{|I|-1}[\widetilde{E}_I^{\circ}]\sum_{\begin{smallmatrix}(k_j)_{j\in I}\in \mathbb N_{\geq 1}^I\\ (\ref{eq3.12})_{n,\beta}\end{smallmatrix}}\L^{-\sum_{j\in I}k_j\nu_j},$$  
and hence, by (\ref{measure}),
$$\tilde{\mu}(A_{n,\alpha})=\sum_{\begin{smallmatrix}\emptyset\not=I\subseteq J\\ h(E_I^{\circ})\subseteq X_0(g)\end{smallmatrix}}(\L-1)^{|I|-1}[\widetilde{E}_I^{\circ}]\sum_{\begin{smallmatrix}\beta\in\mathbb N^m\\ \theta'(\beta,\alpha)\end{smallmatrix}}\sum_{\begin{smallmatrix}(k_j)_{j\in I}\in \mathbb N_{\geq 1}^I\\ (\ref{eq3.12})_{n,\beta}\end{smallmatrix}}\L^{-\sum_{j\in I}k_j\nu_j}$$
in the ring $\mathscr M_k^{\hat\mu}$. It follows that 
\begin{align}
Z(T_0,T_1,\dots,T_r)=\sum_{\emptyset\not=I\subseteq J,\ h(E_I^{\circ})\subseteq X_0(g)}(\L-1)^{|I|-1}[\widetilde{E}_I^{\circ}] S_I(T_0,T_1,\dots,T_r),
\end{align}
where, for every $I\subseteq J$ nonempty,
\begin{align*}
S_I(T_0,T_1,\dots,T_r):=\sum_{(n,\alpha)\in \mathbb N^{r+1}}\sum_{\begin{smallmatrix}\beta\in\mathbb N^m\\ \theta'(\beta,\alpha)\end{smallmatrix}}\sum_{\begin{smallmatrix}(k_j)_{j\in I}\in \mathbb N_{>0}^I\\ (\ref{eq3.12})_{n,\beta}\end{smallmatrix}}\L^{-\sum_{j\in I}k_j\nu_j}T_0^nT_1^{\alpha_1}\cdots T_r^{\alpha_r}.
\end{align*}

Now we fix a nonempty subset $I$ of $J$ and consider the following $(r+2)$-variable formal power series 
\begin{align*}
S(T_0,T_1,\dots,T_r,T_{r+1}):=\sum_{(n,\alpha)\in \mathbb N^{r+1}}\sum_{\begin{smallmatrix}\beta\in\mathbb N^m\\ \theta'(\beta,\alpha)\end{smallmatrix}}\sum_{\begin{smallmatrix}(k_j)_{j\in I}\in \mathbb N_{>0}^I\\ (\ref{eq3.12})_{n,\beta}\end{smallmatrix}}T_0^nT_1^{\alpha_1}\cdots T_r^{\alpha_r}T_{r+1}^{\sum_{j\in I}k_j\nu_j}.
\end{align*}
Denote by $P$ the following Presburger subset of $\mathbb N^{m+r}$,
$$P:=\Big\{(n,\beta,\alpha,\alpha_{r+1})\in\mathbb N^{m+r+2}\mid \theta'(\beta,\alpha), (\ref{eq3.12})_{n,\beta},\sum_{j\in I}k_j\nu_j=\alpha_{r+1}, k_j\geq 1\ \forall j\in I\Big\}.$$ 
Taking the composition of the inclusion of $P$ in $\mathbb N^{m+r+2}$ with the projection $\mathbb N^{m+r+2}\to \mathbb N^{r+2}$ sending $(n,\beta,\alpha,\alpha_{r+1})$ to $(n,\alpha,\alpha_{r+1})$ we get a map $\rho: P\to \mathbb N^{r+2}$. Because for any $\alpha_{r+1}$ in $\mathbb N$ fixed the diophantine equation $\sum_{j\in I}k_j\nu_j=\alpha_{r+1}$ has only finitely many positive solutions, every fiber of $\rho$ is a finite set. 
By \cite[Lemma 5.2]{DL2}, the series $S(T_0,T_1,\dots,T_r,T_{r+1})$ belongs to the subring of $\mathbb Z[[T_0,T_1,\dots,T_r,T_{r+1}]]$ generated by $\mathbb Z[T_0,T_1,\dots,T_r,T_{r+1}]$ and the series $(1-T_0^{c_0}T_1^{c_1}\cdots T_r^{c_r}T_{r+1}^{c_{r+1}})^{-1}$, where $(c_0,c_1,\dots,c_r,c_{r+1})$ are in $\mathbb N^{r+2}\setminus \{(0,\dots,0)\}$. It follows that the formal power series $S_I(T_0,T_1,\dots,T_r)=S(T_0,T_1,\dots,T_r,\L^{-1})$ is a rational series, from which $Z(T_0,T_1,\dots,T_r)$ is an element of $\mathscr M_k^{\hat{\mu}}[[T_0,T_1,\dots,T_r]]_{\sr}$.
\end{proof}

\begin{proposition}\label{Prop4.5}
Let $X$, $f$, $A_{\alpha}$ and $A_{n,\alpha}$ be as in Theorem \ref{Thm3.4}. Let $\Delta$ be a rational polyhedral convex cone in $\mathbb{R}^{r+1}_{\geq 0}$ and $\bar{\Delta}$ its closure. Let $\ell$ and $\varepsilon$ be integral linear forms on $\mathbb Z^{r+1}$ with $\ell(n,\alpha)>0$ and $\varepsilon(n,\alpha)\geq 0$ 
for all $(n,\alpha)$ in $\bar{\Delta}\setminus \{0\}$. Then the formal power series
$$Z(T):=\sum_{(n,\alpha)\in \Delta\cap \mathbb N^{r+1}}\tilde{\mu}\left(A_{n,\alpha}\right)\L^{-\varepsilon(n,\alpha)}T^{\ell(n,\alpha)}$$
is an element of $\mathscr M_k^{\hat{\mu}}[[T]]_{\sr}$, and the limit $\lim_{T\to \infty}Z(T)$ is independent of such an $\ell$ and $\varepsilon$.
\end{proposition}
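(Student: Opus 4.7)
The plan is to adapt the proof of Theorem~\ref{Thm3.4}, enlarging its Presburger set with the condition $(n,\alpha)\in\Delta$ and recording the linear forms $\ell,\varepsilon$, to get rationality; and to realize $Z(T)$ as a one-variable specialization of a multi-variable rational series supported on $\Delta$ for the independence statement, exploiting the $\mathscr M_k^{\hat\mu}$-linearity of $\lim_{T\to\infty}$ and the fact that every building block $\frac{\L^a T^b}{1-\L^a T^b}$ with $b>0$ has limit $-1$ regardless of $(a,b)$.

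For rationality, I would start from the decomposition of $\tilde\mu(A_{n,\alpha})$ derived in the proof of Theorem~\ref{Thm3.4}, yielding
$$Z(T)\,=\,\sum_{\substack{\emptyset\neq I\subseteq J\\ h(E_I^\circ)\subseteq X_0(g)}} (\L-1)^{|I|-1}[\widetilde{E}_I^{\circ}]\sum_{(n,\alpha,\beta,(k_j))\in P_I}\L^{-\sum_{j\in I} k_j\nu_j - \varepsilon(n,\alpha)}\,T^{\ell(n,\alpha)},$$
where $P_I$ is the Presburger subset of $\mathbb N^{1+r+m+|I|}$ defined by $(n,\alpha)\in\Delta$, $\theta'(\beta,\alpha)$, $(\ref{eq3.12})_{n,\beta}$, and $k_j\geq 1$ for every $j\in I$; since $\Delta$ is rational polyhedral, the condition $(n,\alpha)\in\Delta$ is Presburger. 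I would then consider the map $\rho\colon P_I\to\mathbb N^2$ sending $(n,\alpha,\beta,(k_j))$ to $(\ell(n,\alpha),\,\sum_{j\in I} k_j\nu_j+\varepsilon(n,\alpha))$. Positivity of $\ell$ on $\bar\Delta\setminus\{0\}$ makes the slice $\bar\Delta\cap\{\ell=1\}$ compact, so $(n,\alpha)\in\Delta$ with $\ell(n,\alpha)=s$ is bounded; together with $\nu_j\geq 1$ and the equations $(\ref{eq3.12})_{n,\beta}$, this bounds $(k_j)$ and $\beta$, hence the fibers of $\rho$ are finite. Applying \cite[Lemma~5.2]{DL2} to $\sum_{(s,t)}|\rho^{-1}(s,t)|T^s U^t\in\mathbb Z[[T,U]]_{\sr}$ and specializing $U=\L^{-1}$ puts the inner sum in $\mathscr M_k^{\hat\mu}[[T]]_{\sr}$, completing the rationality of $Z(T)$.

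For the independence, I form
$$W_\Delta(T_0,\dots,T_r)\,:=\,\sum_{(n,\alpha)\in\Delta\cap\mathbb N^{r+1}}\tilde\mu(A_{n,\alpha})\,T_0^n T_1^{\alpha_1}\cdots T_r^{\alpha_r}\,\in\,\mathscr M_k^{\hat\mu}[[T_0,\dots,T_r]]_{\sr},$$
whose rationality follows from the multi-variable version of the argument above. Via a simplicial subdivision of $\Delta$, I would choose a rational presentation of $W_\Delta$ whose denominator monomials $T_0^{c_0}\cdots T_r^{c_r}$ all have $(c_0,\dots,c_r)\in\bar\Delta\setminus\{0\}$. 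Writing $\ell_i=\ell(e_i)$ and $\varepsilon_i=\varepsilon(e_i)$, the substitution $T_i\mapsto T^{\ell_i}\L^{-\varepsilon_i}$ then turns each such denominator into $1-\L^{a-\varepsilon\cdot c}T^{\ell\cdot c}$ with $\ell\cdot c>0$, so the substituted series lies in $\mathscr M_k^{\hat\mu}[[T]]_{\sr}$ and recovers $Z(T)$. Since $\lim_{T_0,\dots,T_r\to\infty}\frac{\L^a T_0^{c_0}\cdots T_r^{c_r}}{1-\L^a T_0^{c_0}\cdots T_r^{c_r}}$ equals $-1$, matching the one-variable limit of the substituted block, $\mathscr M_k^{\hat\mu}$-linearity yields
$$\lim_{T\to\infty}Z(T)\,=\,\lim_{T_0,\dots,T_r\to\infty}W_\Delta(T_0,\dots,T_r),$$
which depends only on $W_\Delta$. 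The main obstacle I expect is the combinatorial step of producing such a presentation of $W_\Delta$ with denominators in $\bar\Delta$; this mixes standard toric simplicial subdivision with bookkeeping of the Presburger pieces arising from $P_I$, but does not involve new geometry beyond the machinery of Theorem~\ref{Thm3.4}.
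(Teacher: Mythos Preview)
Your proposal is correct and follows the same global strategy as the paper: feed the log-resolution formula from the proof of Theorem~\ref{Thm3.4} into $Z(T)$ and reduce everything to generating series over Presburger data. For rationality you actually give more detail than the paper, which just declares it ``a direct corollary of Theorem~\ref{Thm3.4}''.

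For the independence statement, the paper organizes the endgame differently. Rather than forming the multi-variable series $W_\Delta$ and comparing limits under the specialization $T_i\mapsto \L^{-\varepsilon_i}T^{\ell_i}$, it eliminates $\alpha$ explicitly: it turns the Presburger set $\{\theta'(\beta,\alpha)\}$ into a cone by adding slack variables $a_u$ and free variables $b_w$, solves $\alpha_s=l_s(\beta',(a_u),(b_w))$, and uses the relations $(\ref{eq3.12})_{n,\beta}$ to rewrite each inner sum as
\[
S_{I,\varepsilon,\ell}(T)=\sum_{\delta\in C\cap\mathbb N^N}\L^{-\omega'(\delta)}T^{\omega(\delta)},
\]
with $C$ a rational polyhedral cone independent of $\ell,\varepsilon$ and $\omega,\omega'$ positive integral linear forms built from $\ell,\varepsilon$. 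It then invokes Guibert's lemma \cite[Lemme 2.1.5]{G} (see also \cite[Section 2.9]{GLM1}), which asserts precisely that $\lim_{T\to\infty}$ of such a series depends only on $C$. Your $W_\Delta$-and-specialization argument is essentially a repackaging of the \emph{proof} of that lemma; the combinatorial step you flag (arranging denominators with exponents in $\bar\Delta$) is exactly the simplicial cone decomposition that underlies Guibert's result. The paper's route is shorter because it cites the lemma as a black box; yours is more self-contained but does more work.
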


\begin{proof}
The first statement is direct corollary of Theorem \ref{Thm3.4}. We now prove the second one, that $\lim_{T\to \infty}Z(T)$ is independent of the linear form $\ell$. As in the proof of Theorem \ref{Thm3.4}, we may assume that $X$ is smooth of dimension $d$ and a closed subvariety of $\mathbb A_k^e$ for some $e\geq d$. Using notation and arguments in the proof of Theorem \ref{Thm3.4} we get
\begin{align*}
Z(T)=\sum_{\emptyset\not=I\subseteq J,\ h(E_I^{\circ})\subseteq X_0(g)}(\L-1)^{|I|-1}[\widetilde{E}_I^{\circ}] S_{I,\varepsilon,\ell}(T),
\end{align*}
where $g$ is as in (\ref{smallg}) and 
\begin{align*}
S_{I,\varepsilon,\ell}(T):=\sum_{(n,\alpha)\in \Delta\cap \mathbb N^{r+1}}\sum_{\begin{smallmatrix}\beta\in\mathbb N^m\\ \theta'(\beta,\alpha)\end{smallmatrix}}\sum_{\begin{smallmatrix}(k_j)_{j\in I}\in \mathbb N_{>0}^I\\ (\ref{eq3.12})_{n,\beta}\end{smallmatrix}}\L^{-\sum_{j\in I}k_j\nu_j}\L^{-\varepsilon(n,\alpha)}T^{\ell(n,\alpha)}.
\end{align*}

Let us consider the Presburger set $Q:=\left\{(\beta,\alpha)\in\mathbb N^{m+r}\mid \theta'(\beta,\alpha)\right\}$. We can assume that there is no congruence relations in the description of $Q$, because if necessary we replace $(\beta,\alpha)$ by $w(\beta,\alpha)+\delta$ for some $w$ in $\mathbb N_{>0}$ and $\delta$ in $\mathbb N^{m+r}$. Since for $Q=Q_1\cup Q_2$, the sum taking over $Q$ satisfies $\sum_Q=\sum_{Q_1}+\sum_{Q_2}-\sum_{Q_1\cap Q_2}$, we may thus assume that $Q$ is the set of integral points in a rational polyhedral convex cone $\overline{Q}$ in $\mathbb R_{\geq 0}^{m+r}$ (cf. proof of Lemma 5.2 of \cite{DL2}). We now assume that $Q$ is defined by a system of linear equations and inequations $p_u(\beta,\alpha)\geq 0$ and $q_v(\beta,\alpha)=0$ for linear forms $p_u$ and $q_v$ with integer coefficients. We add new variables $a_u$ in $\mathbb N$ and consider the system $p_u(\beta,\alpha)=a_u$, $q_v(\beta,\alpha)=0$. Then there exist rational linear forms $l_s$ such that 
$$\alpha_s=l_s(\beta',(a_u)_u,(b_w)_w),\ 1\leq s\leq r,$$ 
where $\beta'=(\beta_i)_{i\in R}$ consists of part of components of $\beta$ (i.e., $R\subseteq \{1,\dots,m\}$), which only equals $\beta$ when the maximum number of linearly independent equations in variables $\alpha$ of the system is greater than or equal to $r$, and $b_w$ are new variables in $\mathbb N$, which only appear when the maximum number of linearly independent equations in variables $\alpha$ of the system is strictly smaller than $r$. Since when replacing $\varepsilon$ and $\ell$ by $e\varepsilon$ and $e\ell$, respectively, for any $e$ in $\mathbb N_{>0}$, the dependence of $\lim_{T\to \infty}S_{I,\varepsilon,\ell}(T)$ on $\varepsilon$ and $\ell$ does not change, we may assume further that $l_s$ are integral linear forms. Now we put $\delta=((k_j)_{j\in I},(a_u)_u,(b_w)_w)$ and
$$\omega(\delta):=\ell\left(\sum_{j\in I}k_jN_j(f),\Big(l_s\Big(\Big(\sum_{j\in I}k_jN_j(f_i)\Big)_{i\in R},(a_u)_u, (b_w)_w\Big)\Big)_{1\leq s\leq r}\right)$$
and
$$
\omega'(\delta):=\sum_{j\in I}k_j\nu_j + \varepsilon\left(\sum_{j\in I}k_jN_j(f),\Big(l_s\Big(\Big(\sum_{j\in I}k_jN_j(f_i)\Big)_{i\in R},(a_u)_u, (b_w)_w\Big)\Big)_{1\leq s\leq r}\right).
$$
By the hypothesis on $\ell$, $\varepsilon$ and $l_s$, the forms $\omega(\delta)$ and $\omega'(\delta)$ are integral linear forms which are positive for all $\delta$ in $\mathbb R^N_{\geq 0}\setminus\{0\}$, where $N$ is the number of components of the vector $\delta$. On the other hand, there exists a rational polyhedral convex cone $C$ in $\mathbb R_{\geq 0}^N$ such that $(n,\alpha)$ is in $\Delta\cap \mathbb N^{r+1}$ if and only if $\delta$ is in $C\cap \mathbb N^N$. Then we have
\begin{align*}
S_{I,\varepsilon,\ell}(T)=\sum_{\delta\in C\cap \mathbb N^N}\L^{-\omega'(\delta)}T^{\omega(\delta)}.
\end{align*}
By Guibert's result \cite[Lemme 2.1.5]{G} (see also \cite[Section 2.9]{GLM1}), $\lim_{T\to\infty}S_{I,\varepsilon,\ell}(T)$ is independent of $\omega$ and $\omega'$, hence it is independent of $\ell$ and $\varepsilon$. This proves the second statement of the proposition.
\end{proof}

\section{Proof of the integral identity conjecture } \label{lastsection}

\subsection{Decomposition of the integral identity's LHS}
Let us consider the polynomial $f$ in Conjecture \ref{conj}
, which induces a regular function, also denoted by $f$, on $\mathbb A_k^d$, with the zero locus $X_0$ containing $0$ in $\mathbb A_k^d$. By the homogeneity of $f$ on the $(x,y)$-variables we have
$$f(x,0,z)=f(0,0,z)=\tilde{f}(z),$$ 
where $\tilde{f}$ is the restriction of $f$ to $\mathbb A_k^{d_3}$ (we consider $\mathbb A_k^{d_3}$ as $\{0\}\times\{0\}\times_k\mathbb A_k^{d_3}$). Then $\mathbb A_k^{d_1}$ is regarded as a $k$-subvariety of $X_0$, and the inclusion is denoted by $i$. In what follows, for simplicity of notation, we shall sometimes write $\gamma$ instead of $(x,y,z)$, for $x$, $y$ and $z$ in $\mathscr L_n(\mathbb A_k^{d_1})$, $\mathscr L_n(\mathbb A_k^{d_2})$ and $\mathscr L_n(\mathbb A_k^{d_3})$, respectively. Consider the rational series 
$$\int_{\mathbb A_k^{d_1}}i^*Z_f(T)=\sum_{n\geq 1}\int_{\mathbb A_k^{d_1}}i^*\left[\mathscr{X}_n(f)\right]\L^{-nd}T^n,$$
with coefficients in $\mathscr M_k^{\hat\mu}$. The minus limit of this rational series is nothing but the left hand side of the integral identity, namely,
\begin{align}\label{LHS}
\int_{\mathbb A_k^{d_1}}i^*\mathscr{S}_f=-\lim_{T\to\infty}\int_{\mathbb A_k^{d_1}}i^*Z_f(T).
\end{align}
Clearly, the identity
$$i^*[\mathscr{X}_n(f)]=\left[\left\{\gamma\in \mathscr{X}_n(f) \mid \gamma(0)\in\mathbb A_k^{d_1}\times_k\{0\}\times_k\{0\}\right\}\to \mathbb A_k^{d_1}, \gamma \mapsto \gamma(0)\right]$$
holds in $K_0^{\hat\mu}(\Var_{\mathbb A_k^{d_1}})$. It is convenient to use the following order of $n$-jets $x(t)$, and similarly, that of $y(t)$:
$$\ord_tx(t)=\min_{1\leq j\leq d_1}\ord_tx_j(t), \quad \ord_ty(t)=\min_{1\leq j\leq d_2}\ord_ty_j(t.$$
In the rest of the present article, instead of writing $\gamma(0)\in\mathbb A_k^{d_1}\times_k\{0\}\times_k\{0\}$ we shall write for short $\gamma(0)\in\mathbb A_k^{d_1}$ for $\gamma$ in $\mathscr L(\mathbb A_k^d)$. We observe that the sets 
$$U_n:=\left\{\gamma\in \mathscr{X}_n(f)\mid \gamma(0)\in\mathbb A_k^{d_1}, \ord_tx(t)+\ord_ty(t)>n\right\}$$
and
$$W_n:=\left\{\gamma\in \mathscr{X}_n(f)\mid \gamma(0)\in\mathbb A_k^{d_1}, \ord_tx(t)+\ord_ty(t)\leq n\right\}$$
are closed and open $k$-subvarieties of $\mathscr X_n(f)\times_{X_0}\mathbb A_k^{d_1}$, respectively, and they are invariant under the natural $\mu_n$-action $\lambda\cdot \gamma(t)=\gamma(\lambda t)$. Hence we get the following identity in $\mathscr M_k^{\hat\mu}$:
\begin{align}\label{eq6.2}
\int_{\mathbb A_k^{d_1}}i^*\left[\mathscr{X}_n(f)\right]\L^{-nd}=\left[U_n\right]\L^{-nd}+\left[W_n\right]\L^{-nd}.
\end{align}
By Theorem \ref{Thm3.4}, the series 
$$U(T):=\sum_{n\geq 1}\left[U_n\right]\L^{-nd}T^n \ \text{ and } \ W(T):=\sum_{n\geq 1}\left[W_n\right]\L^{-nd}T^n$$ are rational series. Denoting $\mathbb{U}:=-\lim_{T\to \infty} U(T)$ and $\mathbb{W}:=-\lim_{T\to \infty} W(T)$ we obtain
\begin{align}\label{eq6.2}
\int_{\mathbb A_k^{d_1}}i^*\mathscr{S}_f=\mathbb{U}+\mathbb{W}.
\end{align}
The theorem hence follows directly from the following computations of $\mathbb{U}$ and $\mathbb{W}$.

\subsection{Computation of $\mathbb{U}$}\label{SS4.1}
In this paragraph we are going to prove the following proposition.
\begin{proposition}\label{ThmB}
The identity $\mathbb{U}=\L^{d_1}\mathscr S_{\tilde{f},0}$ holds in $\mathscr M_k^{\hat\mu}$.
\end{proposition}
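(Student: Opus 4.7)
The plan is to exploit the $\mathbb{G}_m$-homogeneity $f(\lambda x,\lambda^{-1}y,z)=f(x,y,z)$ to decouple $(x,y)$ from $z$ on the region $\ord_t x+\ord_t y>n$, expressing $U_n$ as a $\mu_n$-equivariant product, and then invoking the Hadamard product (Lemma~\ref{Lem2}) to reduce $\mathbb U$ to $\mathbb A\cdot\mathscr S_{\tilde f,0}$ for an auxiliary quantity $\mathbb A$ that I will show equals $\L^{d_1}$.

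The homogeneity forces each monomial of $f$ to have $|\alpha|=|\beta|$ in its $(x,y)$-exponents, so $f=\tilde f(z)+\sum_{|\alpha|=|\beta|\ge 1}c_{\alpha,\beta,\gamma}\,x^{\alpha}y^{\beta}z^{\gamma}$. For any arc $(x,y,z)$ with $\ord_t x+\ord_t y>n$, every monomial in the sum has $t$-order at least $|\alpha|\ord_t x+|\beta|\ord_t y\ge\ord_t x+\ord_t y>n$, so $f(x,y,z)\equiv\tilde f(z)\pmod{t^{n+1}}$ on the support of $U_n$. Setting
\[A_n:=\{(x,y)\in\mathscr L_n(\mathbb A_k^{d_1})\times\mathscr L_n(\mathbb A_k^{d_2}):y(0)=0,\ \ord_t x+\ord_t y>n\},\]
the projection $(x,y,z)\mapsto((x,y),z)$ yields a $\mu_n$-equivariant isomorphism $U_n\cong A_n\times_k\mathscr X_{n,0}(\tilde f)$ (all three $\mu_n$-actions are $\lambda\cdot\gamma(t)=\gamma(\lambda t)$, so the induced action on the product is diagonal), whence $[U_n]=[A_n]\cdot[\mathscr X_{n,0}(\tilde f)]$ in $\mathscr M_k^{\hat\mu}$. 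Setting $A(T):=\sum_{n\ge 1}[A_n]\L^{-n(d_1+d_2)}T^n$, we obtain $U(T)=A(T)\ast Z_{\tilde f,0}(T)$. Rationality of $A(T)$ follows from Theorem~\ref{Thm3.4}, and Lemma~\ref{Lem2} then gives $\mathbb U=(-\lim_{T\to\infty}A(T))\cdot\mathscr S_{\tilde f,0}$, so the problem reduces to showing $-\lim_{T\to\infty}A(T)=\L^{d_1}$.

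For this I write $A_n=B_n\setminus C_n$ with $B_n:=\mathscr L_n(\mathbb A_k^{d_1})\times\{y:y(0)=0\}$ and $C_n:=\{(x,y)\in B_n:\ord_t x+\ord_t y\le n\}$, and stratify $C_n$ by exact orders $(a,b)=(\ord_t x,\ord_t y)$ with $a\ge 0$, $b\ge 1$, $a+b\le n$. The crucial input is relation~\eqref{equiv} of the equivariant Grothendieck ring: since the natural $\mu_n$-action $\lambda\cdot x_i=\lambda^i x_i$ on each jet-coefficient affine space lifts the trivial action on a base point, its equivariant class equals the non-equivariant one. Summing stratum classes gives
\[[A_n]\L^{-n(d_1+d_2)}=\L^{d_1}-(\L^{d_1}-1)(\L^{d_2}-1)\sum_{\substack{a\ge 0,\,b\ge 1\\ a+b\le n}}\L^{-ad_1-bd_2},\]
and interchanging the summation with $\sum_{n\ge 1}T^n$ converts $A(T)$ to
\[A(T)=\L^{d_1}\,\frac{T}{1-T}-\frac{(\L^{d_1}-1)(\L^{d_2}-1)\,\L^{-d_2}T}{(1-T)(1-\L^{-d_1}T)(1-\L^{-d_2}T)}.\]
The first summand contributes $-\L^{d_1}$ to $\lim_{T\to\infty}$. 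For the second, a partial-fraction decomposition produces pieces $C/(1-aT)$ with $a\in\{1,\L^{-d_1},\L^{-d_2}\}$; since $\lim_{T\to\infty}1/(1-aT)=1+\lim_{T\to\infty}aT/(1-aT)=0$, the second summand has zero limit. Hence $-\lim_{T\to\infty}A(T)=\L^{d_1}$, completing the proof.

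The main obstacle is to track the $\mu_n$-equivariant structure through the stratification of $A_n$, where the action scales the $i$-th jet coefficient by $\lambda^i$; relation~\eqref{equiv} resolves this cleanly by identifying equivariant and non-equivariant classes of affine jet spaces. The remaining steps---the product decomposition, the Hadamard identity, and the partial-fraction limit---are then routine.
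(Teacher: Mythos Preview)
Your proof is correct and follows essentially the same strategy as the paper: both exploit the homogeneity to factor $U_n\cong A_n\times_k\mathscr X_{n,0}(\tilde f)$ (the paper calls your $A_n$ by $U'_n$), then apply the Hadamard product and Lemma~\ref{Lem2} to reduce to computing $-\lim_{T\to\infty}\sum_{n\ge 1}[A_n]\L^{-n(d_1+d_2)}T^n=\L^{d_1}$.

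The only difference is in how this last limit is computed. The paper stratifies $A_n$ directly by $m=\ord_t y\in\{1,\dots,n,\infty\}$, obtaining $\sum_{n\ge 1}[A_n]\L^{-n(d_1+d_2)}T^n=(\L^{d_2}-1)\sum_{1\le m\le n}\L^{-(nd_1+(d_2-d_1)m)}T^n+\L^{d_1}\sum_{n\ge 1}\L^{-nd_2}T^n$, and then invokes \cite[Lemma~2.10]{GLM1} to kill the first sum. You instead write $A_n=B_n\setminus C_n$, stratify $C_n$ by the pair $(\ord_t x,\ord_t y)$, and evaluate the limit via partial fractions, using that $\lim_{T\to\infty}(1-aT)^{-1}=0$. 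Your route is slightly more self-contained (it avoids the external reference to \cite{GLM1}) and you are more explicit than the paper about why relation~\eqref{equiv} legitimizes treating the $\mu_n$-equivariant classes of the jet-coefficient strata as their underlying non-equivariant classes. One small caveat: your partial-fraction step tacitly assumes the three poles $1,\L^{-d_1},\L^{-d_2}$ are distinct; when $d_1=d_2$ or some $d_i=0$ the computation needs a minor adjustment (a repeated pole, or a vanishing prefactor $(\L^{d_i}-1)$), but the conclusion is unchanged.
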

\begin{proof}
Note that $f$ is of the form 
$$f(x,y,z)=\sum_{|\alpha|=|\beta|>0} c_{\alpha,\beta,\delta} x^{\alpha}y^{\beta}z^{\delta}+\tilde{f}(z),$$
where $|\alpha|=\alpha_1+\cdots+\alpha_{d_1}$ and $|\beta|=\beta_1+\cdots+\beta_{d_2}$. It implies that
\begin{align*}
U_n&=\left\{\gamma\in \mathscr{L}_n(\mathbb A_k^d)\mid \gamma(0)\in\mathbb A_k^{d_1}, \ord_tx(t)+\ord_ty(t)>n, f(\gamma)=t^{n}\mod t^{n+1}\right\}\\
&=\left\{\gamma\in \mathscr{L}_n(\mathbb A_k^d)\mid \gamma(0)\in\mathbb A_k^{d_1}, \ord_tx(t)+\ord_ty(t)>n, \tilde{f}(z)=t^{n} \mod t^{n+1}\right\}\\
&=U'_n\times \mathscr{X}_{n,0}(\tilde{f}),
\end{align*}
where 
$$U'_n:=\left\{(x,y)\in \mathscr{L}_n(\mathbb A_k^{d_1}\times \mathbb A_k^{d_2})\mid \ord_ty(t)>0, \ord_tx(t)+\ord_ty(t)>n\right\}.$$ 
Denote by $I$ the index set $\{1,2,\ldots,n,\infty\}$, and for each $m$ in $I$, put
\begin{align*}
U'_{n,m}:&=\left\{(x,y)\in U'_n\mid \ord_ty(t)=m\right\}\\
&=\left\{x\in \mathscr{L}_n(\mathbb A_k^{d_1})\mid \ord_tx(t)>n-m\right\}\times \left\{y\in \mathscr{L}_n(\mathbb A_k^{d_2})\mid \ord_ty(t)=m\right\}
\end{align*}
Then we have 
$$U'_n=\bigsqcup_{m\in I} U'_{n,m},$$ 
and, for every $1\leq m \leq n$,
$$[U'_{n,m}]=\L^{md_1}\cdot (\L^{d_2}-1)\L^{(n-m)d_2},$$
while for $m=\infty$,
$$U'_{n,\infty}=\L^{(n+1)d_1}.$$
It yields the identity 
\begin{align*}
[U'_{n}]=\sum_{m\in I} [U'_{n,m}]=\sum_{m=1}^n \L^{md_1}\cdot (\L^{d_2}-1)\L^{(n-m)d_2}+\L^{(n+1)d_1}
\end{align*}
in $\mathscr M_k^{\hat\mu}$, which implies that 
\begin{align}\label{eq6.3}
\sum_{n\geq 1}\left[U'_n\right]\L^{-n(d_1+d_2)}T^n=(\L^{d_2}-1)\sum_{1\leq m\leq n}\L^{-\left(nd_1+(d_2-d_1)m\right)}T^n+\L^{d_1}\sum_{n\geq 1}\L^{-nd_2}T^n .
\end{align}
Applying \cite[Lemma 2.10]{GLM1} we get
$$\lim_{T\to\infty}\sum_{1\leq m\leq n}\L^{-\left(nd_1+(d_2-d_1)m\right)}T^n=0,$$
hence
$$-\lim_{T\to\infty}\sum_{n\geq 1}\left[U'_n\right]\L^{-n(d_1+d_2)}T^n=\L^{d_1}.$$ 
We then conclude that
\begin{align*}
\mathbb{U}&=-\lim_{T\to\infty}\Big(\Big(\sum_{n\geq 1}\left[U'_n\right]\L^{-n(d_1+d_2)}T^n\Big) \ast Z_{\tilde{f},0}(T)\Big)\\
&=\Big(-\lim_{T\to\infty}\sum_{n\geq 1}\left[U'_n\right]\L^{-n(d_1+d_2)}T^n\Big) \cdot \Big(-\lim_{T\to\infty} Z_{\tilde{f},0}(T)\Big)\\
&=\L^{d_1}\mathscr S_{\tilde{f},0}.
\end{align*}
Here, the symbol $\ast$ stands for the Hadamard product of two series in $\mathscr M_k^{\hat\mu}[[T]]$ defined in \eqref{eq3.1}, and the second equality follows from Lemma \ref{Lem2}.
\end{proof}

\subsection{Computation of $\mathbb{W}$}\label{SS4.2}
We are now in position to prove the last proposition of the article and finish the proof of  the integral identity conjecture for regular functions.

\begin{proposition}
The identity $\mathbb{W}=0$ holds in $\mathscr M_k^{\hat\mu}$.
\end{proposition}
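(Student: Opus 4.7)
I would prove $\mathbb{W}=0$ by exploiting the free $\mathbb{G}_m$-action on $W_n$ induced by the homogeneity of $f$. Stratify $W_n$ by the pair $(a,b)=(\ord_t x,\ord_t y)$: for $a\ge 0$, $b\ge 1$, $a+b\le n$, put
\[
W_{n,a,b}:=\{\gamma\in W_n \mid \ord_t x(t)=a,\ \ord_t y(t)=b\},
\]
so that $W_n=\bigsqcup_{a,b} W_{n,a,b}$. Each $W_{n,a,b}$ is a $\mu_n$-invariant locally closed subvariety, and the arc lifts $\bar W_{n,a,b}\subset\mathscr{L}(\mathbb{A}_k^d)$ form a semi-algebraic family of the type allowed by Theorem \ref{Thm3.4} (cut out by order conditions only).

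The formula $\lambda\cdot(x,y,z)=(\lambda x,\lambda^{-1} y,z)$ defines a $\mathbb{G}_m$-action on $W_{n,a,b}$. By the homogeneity hypothesis $f(\lambda x,\lambda^{-1}y,z)=f(x,y,z)$, this action preserves the condition $f(\gamma)\equiv t^n \pmod{t^{n+1}}$; it preserves the orders, fixes $\gamma(0)\in\mathbb{A}_k^{d_1}$, and commutes with the natural $\mu_n$-action $\zeta\cdot\gamma(t)=\gamma(\zeta t)$. Since $a+b\le n<\infty$ forces $(x,y)\ne(0,0)$, the action is free. Writing $x=t^a\bar x$ and stratifying by the first nonzero coordinate of $\bar x(0)\in\mathbb{A}_k^{d_1}\setminus\{0\}$ trivializes the $\mathbb{G}_m$-torsor piecewise, yielding
\[
[W_{n,a,b}]=(\L-1)\,[Q_{n,a,b}] \quad\text{in } \mathscr{M}_k^{\hat\mu},
\]
where $Q_{n,a,b}:=W_{n,a,b}/\mathbb{G}_m$ inherits the $\mu_n$-action.

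Summing, $W(T)=(\L-1)\sum_{(n,a,b)\in\Delta}[Q_{n,a,b}]\L^{-nd}T^n$ with $\Delta=\{(n,a,b):a\ge 0,\,b\ge 1,\,a+b\le n\}$ (shift $b\mapsto b-1$ to obtain a rational cone). This is a rational series by Theorem \ref{Thm3.4}. To show its limit vanishes, I would apply the Denef--Loeser formula via a log-resolution of $f\cdot\prod_i x_i\cdot\prod_j y_j$ (needed to track simultaneously the orders $a,b$ and the $f$-condition), as in the proof of Theorem \ref{Thm3.4}, expressing $\sum[Q_{n,a,b}]\L^{-nd}T^n$ as a finite $\mathscr{M}_k^{\hat\mu}$-linear combination of products of terms $\L^aT^b/(1-\L^aT^b)$. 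The coupled summation over $(a,b)$, combined with the factor $(\L-1)$, produces either double-pole or telescoping structures whose $\lim_{T\to\infty}$ vanish. The prototype is $f=xy$ with $d_1=d_2=1,\,d_3=0$, where a direct count gives $[W_n]=n(\L-1)\L^n$ and hence $W(T)=(\L-1)\L^{-1}T/(1-\L^{-1}T)^2$; a short manipulation using $\lim$ on products of generators yields $\lim_{T\to\infty}\L^{-1}T/(1-\L^{-1}T)^2=0$. The main obstacle is the general combinatorial verification of this cancellation; as an alternative approach, Proposition \ref{Prop4.5} permits replacing $\ell=n$ by a linear form adapted to the $\mathbb{G}_m$-orbit direction (e.g., $\ell(n,a,b)=a+b+1$), using independence of the limit to reduce to a manifestly vanishing expression.
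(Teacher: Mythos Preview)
Your proposal has a genuine gap at the decisive step: after factoring $[W_{n,a,b}]=(\L-1)[Q_{n,a,b}]$, you have not shown that $\lim_{T\to\infty}\sum_{(n,a,b)\in\Delta}[Q_{n,a,b}]\L^{-nd}T^n=0$, and neither of your two suggested routes closes this gap.

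Your appeal to Proposition~\ref{Prop4.5} misreads what that result provides. The proposition asserts that the limit is \emph{independent} of the chosen $\ell$ and $\varepsilon$; it does not let you pick a new $\ell$ for which the limit becomes ``manifestly vanishing'' --- the limit is the same number regardless, so if you could compute it to be zero with one $\ell$ you would already know it is zero. Moreover, your proposed $\ell(n,a,b)=a+b+1$ is not a linear form, and the linear form $a+b$ vanishes on the ray $(n,0,0)$ in the closure of (the shifted) $\Delta$, so the positivity hypothesis of Proposition~\ref{Prop4.5} fails. Finally, Proposition~\ref{Prop4.5} applies to series whose coefficients are measures $\tilde\mu(A_{n,\alpha})$ of semi-algebraic subsets of an arc space $\mathscr L(Y)$; your $Q_{n,a,b}$ is an abstract quotient by the \emph{constant} $\mathbb G_m$-action and is not presented in that form, so you would first have to realize it as such. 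Your prototype $f=xy$ succeeds only because $\lim_{T\to\infty}u/(1-u)^2=0$ for $u=\L^{-1}T$, a special feature that does not carry over without the missing combinatorial verification.

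The missing idea is the one the paper supplies. Rather than quotienting by the constant $\mathbb G_m$ (fiber $\L-1$, independent of the indices), the paper passes to the categorical quotient $\phi:X\to Y=X/\!/\mathbb G_m$ and studies $\phi_n:W_{n,m}\to V_{n,m}\subset\mathscr L_n(Y)$, where $m=\ord_tx+\ord_ty$. Two $n$-jets in the same fiber of $\phi_n$ differ by an \emph{arc-valued} scaling $\tau(t)$, not merely a constant, and the fiber is $\{\tau\in\mathscr L_n(\mathbb A^1_k)\mid \ord_t\tau<m\}$ with class $\L^{n+1}-\L^{n-m+1}$, which depends on $m$. This gives
\[
W(T)=\L^{d}\sum_{1\le m\le n}\tilde\mu(A_{n,m})\,T^{n}\;-\;\L^{d}\sum_{1\le m\le n}\tilde\mu(A_{n,m})\,\L^{-m}T^{n},
\]
with $A_{n,m}\subset\mathscr L(Y)$ semi-algebraic. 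Both series are of the exact shape in Proposition~\ref{Prop4.5} with the \emph{same} family $A_{n,m}$ but different $\varepsilon$ (namely $\varepsilon=0$ and $\varepsilon(n,m)=m$), so their limits coincide and $\mathbb W=0$. In short, the mechanism is not ``factor out $(\L-1)$'' but ``produce a difference of two series to which Proposition~\ref{Prop4.5} assigns the same limit''; that requires a fiber whose class varies with the stratification index, which your constant $\mathbb G_m$-quotient cannot deliver.
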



\begin{proof}
For $n\geq m\geq 1$, we define 
\begin{align}\label{Wnm}
W_{n,m}:=\left\{\gamma\in W_n\mid \ord_tx+\ord_ty=m\right\},
\end{align}
which is invariant under the natural $\hat\mu$-action $\lambda\cdot \gamma(t)=\gamma(\lambda t)$ (we recall that $\gamma=(x,y,z)$). It follows that 
\begin{align}\label{PT}
W_n=\coprod_{1\leq m\leq n}W_{n,m},
\end{align}
therefore,
\begin{align}\label{QTU}
W(T)=\sum_{1\leq m\leq n}\left[W_{n,m}\right]\L^{-nd}T^n,
\end{align}
a formal power series in $\mathscr M_k^{\hat\mu}[[T]]$. 

Let us consider the action of $\mathbb G_{m,k}$ on the affine $k$-variety $X:=\mathbb A_k^{d_1}\times_k \mathbb A_k^{d_2}\times_k \mathbb A_k^{d_3}$ given by 
$$\lambda\cdot (x,y,z):=(\lambda x,\lambda^{-1} y,z),$$ 
for $\lambda$ in $\mathbb G_{m,k}$ and $(x,y,z)$ in $X$. Since the group $\mathbb G_{m,k}$ is reductive and $X$ is affine, a categorical quotient $\phi: X\to Y$ exists and the space $Y$ has a structure of an algebraic $k$-variety (cf. \cite[Chapter 1, Section 2]{MFK94}). We consider the induced morphism 
$$\phi_n\colon \mathscr{L}_n(X)\to \mathscr{L}_n(Y)$$ 
and its restriction to $W_{n,m}$, also denoted by $\phi_n$,
$$\phi_n\colon W_{n,m}\to  V_{n,m}.$$
Here, $V_{n,m}$ denotes the image of $W_{n,m}$ in $\mathscr{L}_n(Y)$ under $\phi_n$. Since $W_{n,m}$ is invariant under the action of $\mu_n$, so is $V_{n,m}$, and furthermore the morphism $\phi_n$ is $\mu_n$-equivariant. We are going to show that $\phi_n$ is in fact a $\mu_n$-equivariant piecewise trivial fibration with fiber 
$$F:=\{\tau \in \mathscr{L}_{n}(\mathbb{A}^1_k)\mid \ord_t\tau<m\},$$
where the action of $\mu_n$ on $F$ is induced from the natural action of $\mu_n$ on $\mathscr{L}_{n}(\mathbb{A}^1_k)$. Indeed, for every field extension $K\supseteq k$, let us take an arbitrary $K$-arc $\psi$ in $V_{n,m}$ and consider the fiber $W_{n,m,\psi}$ of $\phi_n$ over $\psi$.   
Note that, for all $1\leq i \leq d_1$, $1\leq j\leq d_2$ and $1\leq l\leq d_3$, the morphisms $f_{ij}$ and $z_l$ from $X$ to $\mathbb{A}_k^1$ defined respectively by $x_iy_j$ and $z_l$ are $\mathbb{G}_{m,k}$-equivariant with respect to the trivial action of $\mathbb{G}_{m,k}$ on $\mathbb{A}_k^1$. By the universality of $\phi$, these morphisms are constant on every fiber of $\phi$. Hence, the induced morphisms $(f_{ij})_{n}, (z_l)_{n}\colon \mathscr{L}_{n}(X)\to \mathscr{L}_{n}(\mathbb{A}_k^1)$ are constant on $W_{n,m,\psi}$. It implies that, for any two elements $\gamma=(x,y,z)$ and $\gamma'=(x',y',z')$ in $W_{n,m,\psi}$, the following identities hold in $K[[t]]/(t^{n+1})$:
$$
\begin{array}{cll}
x_iy_j=x'_iy'_j, &\text{for} & 1\leq i \leq d_1, \ 1\leq j\leq d_2,\\
z_i=z'_i, &\text{for}  & 1\leq i \leq d_3.
\end{array}
$$

Fix an element $\gamma^{\circ}=(x^{\circ},y^{\circ},z^{\circ})$ in $W_{n,m,\psi}$. We may assume that $\ord_tx^{\circ}=\ord_tx_1^{\circ}$, and under this assumption, may prove that $\ord_tx=\ord_tx_1$ for all $\gamma=(x,y,z)$ in $W_{n,m,\psi}$. 
Let us define a morphism
$$\chi_{\psi}\colon W_{n,m,\psi} \to F\times_k \Spec K$$
which sends a $K$-arc $\gamma=(x,y,z)$ to its first component $x_1$. 
It is easy to see that $\chi_{\psi}$ is a $(\mu_n)_{\psi}$-equivariant morphism. Let us now prove that $\chi_{\psi}$ is isomorphic. For an arbitrary $v$ in $F\times_k \Spec K$, put $\tau=v(x_1^{\circ})^{-1} \in K((t))$ and 
$$(x,y,z)=(\tau x^{\circ} \mod t^{n+1}, \tau^{-1} y^{\circ}\mod t^{n+1}, z^{\circ}).$$ 
Then $(x,y,z)$ is in $W_{n,m,\psi}$ and $\chi_{\psi}(x,y,z)=v$, which proves that $\chi_{\psi}$ is a surjection. The injectivity of $\chi_{\psi}$ follows from the fact that $(f_{ij})_{n}$ and $(z_l)_{n}$ are constant on $W_{n,m,\psi}$. Therefore, the morphism $\chi_{\psi}$ is a $(\mu_n)_{\psi}$-equivariant isomorphism. By Theorem \ref{fund2}, $\phi_n$ is $\mu_{n}$-equivariant piecewise trivial fibration with fiber $F$, and we have
$$[W_{n,m}]=[V_{n,m}]\cdot [F]=[V_{n,m}]\cdot (\mathbb{L}^{n+1}-\mathbb{L}^{n-m+1})$$
in $\mathscr M_k^{\hat\mu}$. We consider the induced morphism of $\phi$ at the level of arc spaces 
$$\phi_{\infty}\colon \mathscr{L}(X)\to \mathscr{L}(Y)$$
and define a semi-algebraic family $A_m$ of semi-algebraic subsets of $\mathscr L(Y)$ as the image of the family $\{\gamma\in \mathscr{L}(X)\mid \ord_tx+\ord_ty=m\}$ under $\phi_{\infty}$. On the other hand, by the hypothesis, the regular function $f$ is $\mathbb G_{m,k}$-equivariant, it thus induces a regular function $g\colon Y\to \mathbb{A}^1_k$ satisfying $f=g\circ \phi$, by the universal property of the quotient $\phi$. In view of Theorem \ref{Thm3.4}, we define
$$A_{n,m}:=\left\{\gamma\in A_{m} \mid g(\gamma)=t^n\mod t^{n+1}\right\}.$$
Then $A_{n,m}$ is in $\mathscr F_Y^{\mu_n}$ and stable at level $n$, and 
$$\tilde{\mu}(A_{n,m})=\left[\pi_n\left(A_{n,m}\right)\right]\L^{-(n+1)(d-1)}=\left[V_{n,m}\right]\L^{-(n+1)(d-1)}$$
(note that $\dim_kY=d-1$). Therefore, we have the decomposition of $W(T)$ into the difference of two series as follows 
\begin{align*}
W(T)&=\sum_{1\leq m\leq n} \left[W_{n,m}\right]\L^{-nd}T^n\\
&=\L^d \sum_{1\leq m\leq n}\left[V_{n,m}\right]\L^{-(n+1)(d-1)}T^n-\L^d \sum_{1\leq m\leq n}\left[V_{n,m}\right]\L^{-(n+1)(d-1)}\L^{-m}T^n\\
&=\L^d\sum_{1\leq m\leq n}\tilde{\mu}\left(A_{n,m}\right)T^n-\L^d\sum_{1\leq m\leq n}\tilde{\mu}\left(A_{n,m}\right)\L^{-m}T^n.
\end{align*}
This implies that $\mathbb W=0$ in $\mathscr M_k^{\hat\mu}$, because the two series in the previous difference decomposition have the same limit, according to Proposition \ref{Prop4.5}.
\end{proof}


\begin{ack}
This article was partially written during the authors' visits to Department of Mathematics - KU Leuven in December 2017 and Vietnam Institute for Advanced Studies in Mathematics in January 2018. The authors thank sincerely these institutions for excellent atmospheres and warm hospitalities.
\end{ack}

\end{document}